\newtheorem{Theorem}[equation]{Theorem}
\newtheorem{Proposition}[equation]{Proposition}
\newtheorem{Lemma}[equation]{Lemma}
\theoremstyle{remark}
\newcommand{\R}{\mathbb{R}}
\newcommand{\RP}{\mathbb{R} P^2}
\newcommand{\tri}{\mathcal{T}}
\DeclareMathOperator{\SL}{SL}
\DeclareMathOperator{\SO}{SO}
\DeclareMathOperator{\PSL}{PSL}
\DeclareMathOperator{\PGL}{PGL}
\begin{document}

\title{An algorithm for the Euclidean cell decomposition\\ of a cusped strictly convex projective surface}
\author{Stephan Tillmann and Sampson Wong}
\begin{abstract}
Cooper and Long generalised Epstein and Penner's Euclidean cell decomposition of cusped hyperbolic $n$--manifolds of finite volume to non-compact strictly convex projective $n$--manifolds of finite volume. We show that Weeks' algorithm to compute this decomposition for a hyperbolic surface generalises to strictly convex projective surfaces.
\end{abstract}

\primaryclass{57M25, 57N10}

\keywords{surface, strictly convex projective structure, Epstein-Penner decomposition, edge flipping algorithm}
\makeshorttitle


\section{Introduction}
The Epstein-Penner decomposition~\cite{epstein1988euclidean} is an elegant yet powerful construction in the study of non-compact hyperbolic manifolds of finite volume. Penner~\cite{penner1987decorated} uses it to assign to each point of the decorated Teichm\"uller space an ideal cell decomposition of the surface and proves the remarkable result that this assignment induces a cell decomposition of the decorated Teichm\"uller space. 

Cooper and Long~\cite{cooper2013generalization} recently generalised Epstein and Penner's construction to obtain Euclidean cell decompositions of all non-compact, strictly convex projective manifolds of finite volume (and simply call this an \emph{Epstein-Penner decomposition}), and point out that this can be used to define a decomposition of the moduli space of such structures on a manifold. This raises the following questions: Is there an algorithm to compute the Epstein-Penner decomposition? Are all components of the decomposition of the moduli space cells? Which other results about the Epstein-Penner decomposition in the hyperbolic setting generalise to the strictly convex projective setting?

This paper addresses the first question for surfaces by generalising an edge flipping algorithm due to Weeks~\cite{weeks1993convex}, which computes the Epstein-Penner decomposition of a cusped hyperbolic surface. Edge flipping algorithms were first used by Lawson \cite{lawson1977software} to compute Delauney triangulations. Such algorithms use a sequence of local modifications to arrive at a globally optimal solution, and decisions on which edge to flip come from purely local considerations. They also work well in computing convex hulls of finite point sets in $\mathbb{R}^3$ (see \cite{gao2013flip}). However, in this paper we are concerned with convex hulls of infinite point sets. The new algorithm is presented in \S\ref{edge flipping}, and the proof of correctness given in \S\ref{sec:correctness}. Since hyperbolic geometry is a subgeometry of projective geometry, the proposed algorithm is still applicable to cusped hyperbolic surfaces, and may be a useful tool for further study of strictly convex projective structures on surfaces and their deformations. For instance, in \cite{HT2015} it is used to show that the decomposition of the moduli space of the once-punctured torus is indeed a cell decomposition. Software for our algorithm was implemented by the second author in {\tt sage}\rm~\cite{sage} to produce the examples given in the last section.


\section{The edge flipping algorithm}


\subsection{Cooper and Long's Construction}
\label{cooper long construction}

We summarise the construction and results due to Cooper and Long~\cite{cooper2013generalization} in the case of surfaces. Let $\Omega$ be a strictly convex domain in the real projective plane and suppose $S = \Omega / \Gamma$ is a strictly convex projective surface of finite volume with $k \geq 1$ cusps. Since there is an analytic isomorphism $\PGL(3, \R)\cong \SL(3,\R),$ we may assume $\Gamma < \SL(3,\R).$ The $(\SL(3, \R), \RP)$--structure of $S$ lifts to a $(\SL(3, \R), \mathbb{S}^2)$--structure, and we denote a lift of $\Omega$ to $\mathbb{S}^2\subset \R^3$ by $\Omega^+.$
A \emph{light-cone representative} of $p \in \partial \Omega$ is a lift $v_p \in \mathcal L = \mathcal L^+ = \R^+ \cdot \partial \Omega^+.$ 

Each cusp $c$ of $S$ corresponds to an orbit of parabolic fixed points on $\partial \Omega.$ Choose an orbit representative $p_c \in \partial \Omega,$ and hence a light-cone representative $v_{c} = v_{p_c} \in \mathcal L.$ The set $B = \{ \Gamma \cdot v_c \mid c \text{ is a cusp of } S\}$ is discrete. Let $C$ be the convex hull of $B.$ Then the projection of the faces of $\partial C$ onto $\Omega$ is a $\Gamma$--invariant cell decomposition of $\Omega,$ and hence descends to a cell decomposition of $\Omega/\Gamma,$ called an \emph{Epstein-Penner decomposition} by Cooper and Long. Varying the light-cone representatives $v_c$ gives a $(k-1)$--parameter family of $\Gamma$--invariant cell decompositions of $\Omega$. In particular, the decomposition of the surface $\Omega/\Gamma$ is canonical if $k = 1.$


\subsection{Ideal triangulations of surfaces}

The following facts are well known (see, for instance, Lackenby \cite{lackenby2000taut} for the first two). The second is not needed to prove existence and correctness of our algorithm; we give a proof using the algorithmic construction of the Epstein-Penner decomposition. Let $S_{g,k}$ be a closed orientable surface of genus $g$ with $k$ marked points, and let $S$ denote the complement of the set $P$ of marked points. We will always assume that $S$ has negative Euler characteristic, whence $2g+k>2.$ An \emph{essential arc} in $S$ is the intersection with $S$ of an arc embedded in $S_{g,k}$ that has endpoints in $P,$ interior disjoint from $P$ and is not homotopic (relative to $P$) to a point in $S_{g,k}.$ An \emph{ideal triangulation} $\tri$ of $S$ is a union of pairwise disjoint essential arcs that are pairwise non-homotopic. The components of $S\setminus \tri$ are \emph{ideal triangles}, and we regard two ideal triangulations of $S$ as equivalent if they are isotopic via an isotopy of $S_{g,k}$ that fixes $P$.

\begin{Lemma}
\label{ideal triangulation}
The surface $S$ admits an ideal triangulation. Moreover, every ideal triangulation has $-2 \chi(S)$ ideal triangles.
\end{Lemma}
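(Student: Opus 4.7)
The plan is to handle the two claims separately.

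For existence, I would take a maximal collection $\tri$ of pairwise disjoint, pairwise non-homotopic essential arcs in $S$. An initial essential arc exists because $\chi(S) < 0$ and $k \geq 1$, and maximality can be reached since the size of such a collection is bounded (by the Euler characteristic count below). The main task is then to show that every component $R$ of $S \setminus \tri$ is an ideal triangle, i.e., its closure in $S_{g,k}$ is a closed disk whose boundary is a cycle of three edges of $\tri$ with vertices at marked points. If $R$ fails to be a triangle---due to positive genus, more than one boundary loop, or more than three cusps on the boundary---one constructs an essential arc inside $R$ disjoint from $\tri$ and not homotopic (rel $P$) into any arc of $\tri$, contradicting maximality. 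Each sub-case is handled by a standard surgery argument on a compact surface with boundary.

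For the count, I would use Euler characteristic. Given an ideal triangulation with $E$ edges and $T$ triangles, view it as a CW decomposition of the closed surface $S_{g,k}$: the $k$ marked points are the $0$-cells, the closures of the arcs are the $1$-cells, and the closures of the ideal triangles are the $2$-cells. Each triangle has three edge-sides, and each edge has exactly two edge-sides incident to it (possibly from the same triangle), so $3T = 2E$. Therefore
\[
2 - 2g \;=\; \chi(S_{g,k}) \;=\; k - E + T \;=\; k - \tfrac{T}{2},
\]
which rearranges to $T = 2(2g + k - 2) = -2\chi(S)$, since $\chi(S) = 2 - 2g - k$.

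The main obstacle is the existence argument: ruling out non-triangular complementary regions in a maximal arc system. This requires a case-by-case topological analysis, but each case reduces to the elementary fact that a compact surface with boundary which is not a triangular disk admits a proper essential arc. The count, by contrast, drops out immediately from $3T = 2E$ and the Euler characteristic of $S_{g,k}$, and in particular it does not depend on the chosen ideal triangulation.
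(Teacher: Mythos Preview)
Your argument is correct. The Euler-characteristic count is essentially the same as the paper's (the paper just states ``the number of triangles follows from $\chi(S)=\chi(S_{g,k})-k$'' without writing out $3T=2E$), so there is nothing to compare there.

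For existence, however, you take a genuinely different route. The paper argues constructively: for $g\ge 1$ it starts from a one-vertex triangulation of $S_{g,k}$ with the vertex at some $v\in P$, then for each remaining marked point $w$ subdivides the triangle containing $w$ into three by coning to $w$; for $g=0$, $k\ge 3$ it starts from a single triangle on three marked points and iterates. Your approach instead takes a maximal arc system and shows every complementary region is a triangle by ruling out higher genus, extra boundary components, or more than three boundary arcs. The paper's method is shorter and explicit but imports the existence of one-vertex triangulations as a black box; your maximality argument is self-contained and is the standard one in the arc-complex literature, at the cost of a short case analysis. One small remark: your appeal to ``the Euler characteristic count below'' to bound the size of an arc system is not circular, but note that the count as stated applies to triangulations; the bound for arbitrary arc systems needs the observation that no complementary region is a monogon or bigon (which follows from the arcs being essential and pairwise non-homotopic), after which the same $2E\ge 3F$ inequality gives $E\le -3\chi(S)$.
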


\begin{proof}
First suppose $g\ge 1.$ It is well-known that $S_{g,k}$ has a (singular) triangulation with a single vertex $v$ and such that no edge is null-homotopic and no two edges are homotopic. We may assume that $v\in P$ and that all edges are disjoint from the remaining points in $P.$ Given $v\neq w \in P,$ we can divide the triangle containing $w$ into three triangles with vertices in $\{v, w\}$ by adding three arcs not meeting $P$ in their interiors. Each of these arcs is essential in $S$ (since it connects distinct points in $P$) and no two are homotopic relative to $P$ (since this is true for the arcs in the boundary of the triangle). So by construction, the resulting triangulation of $S_{g,k}$ gives an ideal triangulation of $S_{g,k} \setminus \{v, w\}.$ This procedure can now be iterated to give an ideal triangulation of $S = S_{g,k}\setminus P.$ The number of triangles follows from $\chi(S) = \chi(S_{g,k}) - k.$

In the case $g=0$ we have $k\ge 3$, so as a starting point one can take a triangle on $S_{0,k}$ with vertices on 3 pairwise distinct points of $P,$ and then apply the same procedure as above.
\end{proof}

An \emph{edge flip} on an ideal triangulation consists of picking two distinct ideal triangles sharing an edge, removing the shared edge to form a square, and dividing this square along its other diagonal.

For instance, in the case of the once-punctured torus any two ideal triangulations are made up of three essential arcs and divide the once-punctured torus into two ideal triangles. All of these ideal triangulations are combinatorially equivalent. However, performing an edge flip results in a non-isotopic ideal triangulation. The space of isotopy classes of ideal triangulations of the once-punctured torus naturally inherits the structure of the infinite trivalent tree, where vertices correspond to isotopy classes of ideal triangulations, and there is an edge between two such classes if and only if they are related by an edge flip. A well known geometric realisation of this was described by Floyd and Hatcher~\cite{FH}. In general, we have:

\begin{Lemma}
\label{finite sequence of elementary moves}
Any two ideal triangulations of $S$ are related by a finite sequence of edge flips. 
\end{Lemma}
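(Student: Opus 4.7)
The plan is to invoke the edge flipping algorithm itself, together with the canonicity of the Epstein--Penner decomposition, rather than attempt a direct combinatorial argument on arcs. Fix any cusped strictly convex projective structure on $S$ (for instance, a complete hyperbolic one) and a choice of light-cone representatives $v_c$ for its cusps, as in \S\ref{cooper long construction}. This data determines a canonical cell decomposition $\mathcal E$ of $S$ into ideal polygons.

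Given any ideal triangulation $\tri$ of $S$, the algorithm of \S\ref{edge flipping}, whose correctness is established in \S\ref{sec:correctness}, produces a finite sequence of edge flips converting $\tri$ into an ideal triangulation $\tri_{\mathcal E}$ whose $1$--skeleton contains that of $\mathcal E$ and which subdivides each non-triangular cell of $\mathcal E$ by diagonals. Applying this to each of $\tri_1$ and $\tri_2$ gives two such refinements $\tri_{\mathcal E}^{(1)}$ and $\tri_{\mathcal E}^{(2)}$ that differ only in the choice of diagonals inside the finitely many non-triangular cells of $\mathcal E$. Within a single ideal polygon, any two triangulations by diagonals are related by a finite sequence of flips, a classical fact for triangulations of convex polygons, proved by induction on the number of sides. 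Performing these polygon-by-polygon flip sequences links $\tri_{\mathcal E}^{(1)}$ to $\tri_{\mathcal E}^{(2)}$; concatenating with the algorithmic sequences (reversed as needed, since each flip is involutive) yields the desired finite flip sequence from $\tri_1$ to $\tri_2$.

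The main obstacle is circularity, since Lemma~\ref{finite sequence of elementary moves} is being deduced from results appearing later in the paper. One must therefore verify that neither the statement of the algorithm in \S\ref{edge flipping} nor its correctness proof in \S\ref{sec:correctness} appeals to flip-connectivity of ideal triangulations. Since the algorithm takes as input an arbitrary ideal triangulation (which exists by Lemma~\ref{ideal triangulation}) and operates by a purely local greedy rule, with termination justified by monotone decrease of a geometric quantity attached to the lifted triangulation, no such circularity arises and the deduction above goes through. A secondary point to check is that the algorithm truly halts at a refinement of $\mathcal E$ rather than at some other stable configuration; this will be immediate from the description of the local flip criterion in terms of the convex hull $C$, since an edge is stable precisely when it lies on $\partial C$.
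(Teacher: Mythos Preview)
Your proposal is correct and follows essentially the same route as the paper: equip $S$ with a hyperbolic (or more generally strictly convex projective) structure, run the edge flipping algorithm from each of the two triangulations down to a triangulated refinement of the canonical Epstein--Penner decomposition, and then connect the two refinements using flip-connectivity of triangulations of a polygon. Your explicit check that the forward references to \S\ref{edge flipping}--\S\ref{sec:correctness} introduce no circularity is a welcome addition that the paper leaves implicit.
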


\begin{proof}
The surface $S= S_{g,k}\setminus P$ has a complete hyperbolic structure of finite volume since its Euler characteristic is negative. The Epstein-Penner construction provides a canonical ideal cell decomposition of $S$. Below Theorems~\ref{algorithm correctness}  and \ref{algorithm finite} imply that any ideal triangulation of $S$ can be modified into the canonical cell decomposition by a finite number of edge flips and deleting a finite number of redundant edges. Since deleting edges is not an elementary move, the two theorems imply that any ideal triangulation of $S$ is related to the canonical ideal cell decomposition but with its polygonal cells triangulated. But any two triangulations of a polygon are also related by a finite number of edge flips. Hence, any two ideal triangulations are related to the canonical cell decomposition plus redundant edges by a finite sequence of edge flips, so are related to each other.
\end{proof}

Now suppose $S$ has a strictly convex real projective structure of finite volume, giving identifications $\pi_1(S) = \Gamma < \SL(3,\R)$ and $S = \Omega / \Gamma,$ where $\Omega \subset \RP$ is a strictly convex set.  An ideal triangulation of $S$ is \emph{straight} if each ideal edge is the image of the intersection of $\Omega$ with a projective line.

\begin{Lemma}
\label{geodesic ideal triangulation}
Every ideal triangulation of $S$ is isotopic to a straight ideal triangulation.
\end{Lemma}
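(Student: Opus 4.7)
The plan is to straighten the triangulation in the universal cover and then use strict convexity to check that disjointness is preserved. Lift $\tri$ to a $\Gamma$-equivariant collection $\tilde\tri$ of properly embedded arcs in $\Omega$. Each lift $\tilde\alpha$ of an essential arc has two ends exiting cusps of $S$, and consequently has two well-defined endpoints $p_1,p_2 \in \partial\Omega$ which are parabolic fixed points of $\Gamma$. Define the straightening $\tilde\alpha^*$ to be the intersection of $\Omega$ with the unique projective line through $p_1$ and $p_2$; strict convexity of $\Omega$ guarantees that this intersection is an open segment in $\Omega$ whose closure in $\overline\Omega$ has exactly $p_1$ and $p_2$ added. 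Since the construction depends only on the endpoints on $\partial\Omega$, it is $\Gamma$-equivariant, so the family $\{\tilde\alpha^*\}$ descends to a family of arcs $\tri^*$ on $S$.

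The key point to verify is that the straightened arcs are pairwise disjoint. For two distinct lifts $\tilde\alpha,\tilde\beta \in \tilde\tri$, their endpoint pairs in $\partial\Omega$ are either disjoint-and-unlinked, or share a common endpoint, because $\tilde\alpha$ and $\tilde\beta$ are disjoint and both run out to $\partial\Omega$. Strict convexity of $\Omega$ implies that the corresponding chords $\tilde\alpha^*$ and $\tilde\beta^*$ meet at most at a common boundary endpoint in $\partial\Omega$, never in the interior of $\Omega$. Hence $\tri^*$ is a disjoint union of essential arcs in $S$, and the complementary regions in $\Omega$ are open triangular regions each bounded by three chords with vertices at three parabolic fixed points, so $\tri^*$ is an ideal triangulation in which every edge is straight.

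To produce the isotopy between $\tri$ and $\tri^*$ we work equivariantly in $\Omega$. For each ideal triangle $\Delta$ of $\tilde\tri$, the straightened edges bound a straight ideal triangle $\Delta^*$ whose vertex set on $\partial\Omega$ coincides with that of $\Delta$; both $\Delta$ and $\Delta^*$ are topological discs sharing the same three ideal vertices and the same (up to reparametrisation) boundary edges once the edges have been straightened. A standard cone/Alexander-type construction produces a homeomorphism $\Delta \to \Delta^*$ fixing the ideal vertices and agreeing with edge straightenings on the boundary, and these can be chosen $\Gamma$-equivariantly since $\Gamma$ permutes triangles freely up to the finitely many orbits. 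Gluing along edges gives a $\Gamma$-equivariant homeomorphism of $\Omega$ isotopic to the identity, which descends to the required isotopy on $S$. The main subtlety is to carry out this piecewise construction so that the isotopy extends continuously over the ends — this is handled by choosing, in each cusp neighbourhood, the straight-line homotopy within horoball-style regions based at the parabolic fixed point, and checking that the resulting maps match on shared edges.
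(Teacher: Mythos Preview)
Your argument is correct and follows essentially the same approach as the paper: lift to $\Omega$, replace each edge by the chord joining its ideal endpoints, observe that the resulting straight triangulation is $\Gamma$--equivariant, and descend a $\Gamma$--equivariant isotopy to $S$. The only notable difference is packaging: the paper simply asserts the straightening homeomorphism of $\Omega$ and then invokes the Alexander trick on the closed disc $\overline{\Omega}$ to get the isotopy, whereas you spell out the disjointness of the straightened chords and build the homeomorphism triangle-by-triangle; your version is a more explicit realisation of the same idea.
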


\begin{proof}
The ideal triangulation $\tri$ of $S$ can be lifted to a $\Gamma$--equivariant topological ideal triangulation $\widetilde{\tri}$ of $\Omega.$ There is a homeomorphism of $\Omega$ that fixes the boundary $\partial \Omega$ and takes each topological ideal edge to a segement of a projective line. Since $\overline{\Omega}$ is a closed disc, this homeomorphism is isotopic to the identity. Whence the topological ideal triangulation $\widetilde{\tri}$ of $\Omega$ is isotopic to a straight ideal triangulation. Since the set of ideal endpoints of edges is $\Gamma$--equivariant and any two such endpoints determine a unique segment of a projective line in $\Omega,$ the straight ideal triangulation is $\Gamma$--equivariant. We may therefore choose a $\Gamma$--equivariant isotopy between the ideal triangulations of $\Omega,$ and push this down to an isotopy of $S.$
\end{proof}


\subsection{The algorithm}
\label{edge flipping}

The crucial difference between the proposed algorithm and Weeks' is the method of detection of convex angles. Weeks' tilt formula \cite{weeks1993convex} and its generalisations \cite{sakuma1995generalized, ushijima2002tilt} rely on the Minkowski norm whereas the proposed algorithm uses the standard Euclidean metric on $\R^3.$ The new method is to check the following property of a convex hull: if $F$ is a face of the convex hull $C$, all other vertices of the polyhedron are on the same side of $X_F$, where $X_F$ is the plane through $F$. In the case where $C$ is the convex hull of the $\Gamma$--invariant discrete subset $B$ on a light-cone $\mathcal L$, the plane $X_F$ separates $\R^3$ into two components. If $F$ is a face of $C$, then the interior of the polyhedron $C$ lies entirely in one component of $\R^3\backslash X_F$, and the other component contains the origin.

Define the \emph{neighbouring faces} of $F$ to be the faces which share an edge with $F$. Define the vertex of a neighbouring face which is not on the shared edge to be a \emph{neighbouring vertex} of $F$. Let $X_F$ be the plane passing through $F$. If $v$ and the origin lie in the same component of $\R^3 \backslash X_F$, then we say $v$ is \emph{below} the face $F$. If $v$ and the origin lie in different components then $v$ is \emph{above} the face $F$. Otherwise $v$ is on the plane $X_F$ and is \emph{coplanar} with $F$.

Define an \emph{edge flip} on $F$ and a neighbouring vertex $v$ to be the edge flip which removes the common edge. If the vertices of $F$ are $\{a,b,c\}$ where $ab$ is the common edge, then the edge flip creates the two new faces $\{c,v,a\}$ and $\{c, b, v\}$. We call an edge flip \emph{admissible} if $v$ is below $F$.

We call $F$ \emph{locally convex} if each neighbouring vertex $v$ is either above $F$ or coplanar with $F$. Equivalently, $F$ is locally convex if there are no admissible edge flips which include face $F$.

Let $\Omega$ be a strictly convex domain in $\RP$. The edge flipping algorithm is as follows. Let the projective surface be $\Omega/\Gamma,$ where $\Gamma \subset \SL(\Omega)$ is freely acting, discrete and finitely generated. We start with an arbitrary cell decomposition of $\Omega/\Gamma$ into geodesic ideal triangles, its existence is ensured by Lemma~\ref{geodesic ideal triangulation}. The ideal triangulation projects to a $\Gamma$--invariant polyhedron with $\Gamma$--invariant vertices on the light-cone $\mathcal L$.

For a face $F$ on the $\Gamma$--invariant polyhedron, we call $\Gamma \cdot F$ the \emph{face class} of $F$. Proposition~\ref{gamma invariant} shows that $F$ is locally convex if and only if $gF$ is convex, where $g \in \Gamma$. Hence, it makes sense to call a face class $\Gamma \cdot F$ locally convex. 

For each face class $\Gamma \cdot F$, we check the neighbouring vertices for any admissible edge flips. If there is an admissible edge flip, then it is performed (replacing $\Gamma \cdot F$ and another face class with two different face classes). This gives a different $\Gamma$--invariant polyhedron with vertices on the light-cone $\mathcal L$, and the entire algorithm starts again.

If there are no admissible edge flips, another face class is checked. Although there are infinitely many faces in the polyhedron, there are only finitely many face classes. The algorithm terminates when there are no more admissible edge flips. The algorithm terminates in finitely many steps (Theorem \ref{algorithm finite}). Moreover, the $\Gamma$--invariant polyhedron in the final iteration is convex (Proposition \ref{algorithm globally convex}).

There is one final step in the algorithm to make the  polyhedron equal to $C$. Even though our polyhedron is convex, since each face class is a triangle, the polyhedron is actually a \emph{triangulation} of $\partial C$. We call an edge of a polyhedron \emph{redundant} if it lies in the interior of a face of $\partial C$. Note that an edge is redundant if and only if the two faces that meet at $e$ are coplanar.

After all admissible edge flips are performed, a cleanup step is applied. If two adjacent faces are coplanar, then their common edge is removed. This is repeated until there are no redundant edges, upon which the polyhedron is equal to the convex hull (Theorem \ref{algorithm correctness}).

Pseudocode for this algorithm is provided below (Algorithm~1).

\begin{algorithm}
\caption{Edge Flipping Algorithm for the Convex Hull Construction}
\begin{algorithmic}[1]
\STATE $T = $ an arbitrary cell decomposition of $\Omega/\Gamma$ into triangles
\STATE $P = \Gamma$-invariant polyhedron induced by $T$. 
\FORALL{$F \in P/\widetilde \Gamma$}
\FORALL{$v$ a neighbouring vertex of $F$}
\IF{$v$ is below $F$}
\STATE{Perform an edge flip on \{$v$, $F$\}}
\STATE{\textbf{go to} line 3}
\ENDIF
\ENDFOR
\ENDFOR
\FORALL{$e$ an edge of $P/\widetilde \Gamma$}
\IF{$F_1$, $F_2$ sharing $e$ are coplanar}
\STATE{Remove $e$ and merge $F_1, F_2$}
\ENDIF
\ENDFOR
\RETURN{$P$}
\end{algorithmic}
\end{algorithm}


\subsection{Proof of correctness}
\label{sec:correctness}

Given $p, q \in \partial \Omega$ we need to be able to choose light cone representatives $\tilde p,$ $\tilde q$ that either both lie on $\mathcal L^+ = \R^+ \cdot \partial \Omega^+$ or both lie on $\mathcal L^- =\R \cdot \partial \Omega^-.$ We call such lifts \emph{compatible}. (The choice of positive or negative light cone does not affect the notion of convexity since this is preserved by the inversion map.) This is achieved as follows. Choose any lift $\tilde p \in \R^3.$ Consider any non-trivial $A \in \Gamma.$ If $Ap = q$ or $Aq = p,$ then $\tilde{q} = A\tilde{p}$ or $\tilde{q} = A^{-1}\tilde{p}$ respectively is a compatible lift. Otherwise the four points $p,$ $q$, $Ap$, $Aq$ span a rectangle with vertices in  $\partial \Omega.$ Denote $d_1$ and $d_2$ the two diagonals of this rectangle.
Choose any lift $\tilde q \in \R^3$ and denote $\tilde{d}_1$ and $\tilde{d}_2$ the line segments between the chosen lifts amongst $\tilde p$, $\tilde q$, $A\tilde p$, $A\tilde q$ of their endpoints.

\begin{Lemma}
The triangles $[\tilde{d}_1, o]$ and $[\tilde{d}_2, o]$ meet only in $o$ if and only if the lifts $\tilde p$ and $\tilde q$ not compatible.
\end{Lemma}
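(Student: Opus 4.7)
The plan is to reduce the three-dimensional intersection to a one-dimensional problem on the line where the two supporting planes meet, and then to track how the compatibility of $\tilde p$ and $\tilde q$ determines which ray of that line each triangle extends along.

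To set up, observe that since $p, q, Ap, Aq$ is inscribed in the strictly convex set $\Omega$, the diagonals $d_1, d_2$ cross at a single interior point $x \in \Omega$. Let $L_i \subset \RP$ be the projective line containing $d_i$ and $P_i \subset \R^3$ the plane through $o$ projecting to $L_i$; then $P_1 \cap P_2$ is a line $\ell$ through $o$ whose two intersection points with $\mathbb{S}^2$ are the lifts $\pm x^+$ of $x$, with $x^+ \in \Omega^+$. Since $[\tilde d_i, o] \subset P_i$, the intersection $[\tilde d_1, o] \cap [\tilde d_2, o]$ lies in $\ell$, and each triangle contains $o$; so the question becomes which ray of $\ell$ each triangle extends along.

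The core two-dimensional computation is the following. Fix $i$, write $\tilde d_i = [\tilde u_i, \tilde v_i]$ for the two relevant lifts among $\{\tilde p, \tilde q, A\tilde p, A\tilde q\}$, and expand $x^+ = \alpha_i \tilde u_i + \beta_i \tilde v_i$. The parametrization $\{s \tilde u_i + t \tilde v_i : s, t \geq 0,\ s + t \leq 1\}$ of $[\tilde d_i, o]$ shows immediately that $[\tilde d_i, o] \cap \ell$ is a non-trivial segment lying on the $x^\epsilon$-ray whenever $\alpha_i$ and $\beta_i$ are both of sign $\epsilon$, and is just $\{o\}$ when the two coefficients have opposite signs. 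The geometric input is that $x^+$ is a \emph{strictly positive} combination of the $\mathcal L^+$-lifts of the endpoints of $d_i$, because $x$ lies on the chord $d_i \subset \Omega$ and the cone over $\Omega^+$ is convex; hence flipping either endpoint of $\tilde d_i$ to the opposite component of $\mathcal L$ flips the sign of the corresponding coefficient.

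Finally, combine the analyses for $i = 1, 2$. Since $A \in \Gamma$ preserves $\Omega^+$, and hence each component of $\mathcal L$, the lifts $A\tilde p$ and $A\tilde q$ lie on the same components of $\mathcal L$ as $\tilde p$ and $\tilde q$ respectively; so the sign pattern of the endpoints of $\tilde d_1$ and $\tilde d_2$ is determined entirely by whether $\tilde p$ and $\tilde q$ are compatible. One then runs through the three possible cyclic orderings of $p, q, Ap, Aq$ on $\partial \Omega$, which fix the diagonal pairing: in every ordering, compatibility of $\tilde p, \tilde q$ makes both triangles extend along the same ray of $\ell$, so they share a non-trivial segment, while incompatibility forces either both triangles to meet $\ell$ only at $o$, or the two triangles to extend along opposite rays. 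In every incompatible subcase the intersection is exactly $\{o\}$. I expect the main obstacle to be keeping the cyclic-ordering case analysis transparent; the sign-flipping observation inside each $P_i$ is the geometric heart of the argument.
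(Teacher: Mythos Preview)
Your argument is correct. The reduction to the line $\ell=P_1\cap P_2$ and the sign analysis of the coefficients $\alpha_i,\beta_i$ in $x^+=\alpha_i\tilde u_i+\beta_i\tilde v_i$ is sound, and your three-case check (diagonals $\{p,Ap\}/\{q,Aq\}$, $\{p,q\}/\{Ap,Aq\}$, $\{p,Aq\}/\{q,Ap\}$) is exactly what is needed; in the incompatible situation one indeed gets opposite rays in the first case and ``triangle meets $\ell$ only at $o$'' in the other two.

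Your route differs from the paper's. The paper asserts the compatible direction without further comment and, for the incompatible direction, observes that $\tilde p$ and $-\tilde q$ are compatible and then tracks what the inversion $v\mapsto -v$ does to $\tilde q$ and $A\tilde q$ in the three diagonal pairings. In other words, the paper reduces the incompatible case to the already-established compatible one via an involution, whereas you give a single uniform analysis of both directions by computing $[\tilde d_i,o]\cap\ell$ directly. Your approach is a bit more explicit and supplies the justification for the compatible direction that the paper leaves implicit; the paper's inversion trick is slightly slicker once the compatible case is in hand. Both end with the same three-case check on the cyclic order.
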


\begin{proof}
Suppose $\tilde p$ and $\tilde q$ are compatible. Then the triangles meet along a non-degenerate segment. If $\tilde p$ and $\tilde q$ are not compatible, then $\tilde p$ and $-\tilde q$ are compatible. Analysing the effect of the inversion map
on $-\tilde q$ and $-A \tilde q$ in each of the three cases of whether the diagonal from $p$ is $[p, q],$ $[p, Ap]$ or $[p, Aq]$
shows that the triangles only meet in $o.$
\end{proof}

Compatibility is a transitive notion. For the construction, choose orbit representatives $p_1, \ldots, p_k\in \partial \Omega$ for the cusps of $S$ and a lift $\tilde{p}_1\in \R^3$ of $p_1.$ This lift determines the choice of lift of $\Omega$ and hence the positive light cone $\mathcal{L}$. Then for each $j = 2, \ldots k$ choose a light cone representative $\tilde{p}_j$ for $p_j$ compatible with $p_1.$ We usually conjugate $\Gamma$ so that $\tilde{p}_1$ has a simple form, such as $(1,0,0)$, and the only choice left is then to specify a length for each $\tilde{p}_j$ for $j \ge 2,$ giving the $k-1$ degrees of freedom. This completely determines the cell decomposition since if $q$ is in the orbit of some $p_i,$ then Cooper and Long's construction forces $\tilde q = B\tilde{p}_i,$ where
$B \in \Gamma$ is such that $Bp_i = q.$

\begin{Proposition}
\label{gamma invariant}
Point $\widetilde p$ lies below face $F$ $\iff$ $g \widetilde p$ lies below $gF$ for all $g \in \SL(\Omega)$.
\end{Proposition}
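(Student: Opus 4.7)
The proposition is essentially a tautology once one unwinds the definitions, but let me lay out how I would present it. The key fact is that $\SL(\Omega) \subseteq \SL(3,\R)$ consists of linear automorphisms of $\R^3$, so every $g \in \SL(\Omega)$ is in particular a homeomorphism of $\R^3$ that fixes the origin.

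First I would note that because $g$ is an affine (indeed linear) bijection of $\R^3$, it sends planes to planes. The plane $X_F$ through $F$ is therefore mapped to a plane through $gF$, and since $X_{gF}$ is by definition the unique such plane, we have $g(X_F) = X_{gF}$.

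Next, because $g$ is a homeomorphism of $\R^3$, it restricts to a homeomorphism $\R^3 \setminus X_F \to \R^3 \setminus X_{gF}$ and induces a bijection between path components. The final ingredient is that $g$ is linear, so $g(o) = o$; hence the component of $\R^3 \setminus X_F$ that contains the origin is sent to the component of $\R^3 \setminus X_{gF}$ that contains the origin.

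Putting these three observations together, $\widetilde p$ and $o$ lie in the same component of $\R^3 \setminus X_F$ if and only if $g\widetilde p$ and $g(o) = o$ lie in the same component of $\R^3 \setminus X_{gF}$, which is exactly the claim. There is no real obstacle here: the only mildly delicate point worth mentioning is that $X_F$ does not pass through the origin, so the notion of ``below'' is well-defined. This is automatic because the vertices of $F$ lie on the light-cone $\mathcal L$ and $F$ is a face of the convex hull $C$ of points on $\mathcal L$; no supporting plane of such a face can contain $o$, since $o$ lies in the interior of $C$ (equivalently, $o$ is separated from the vertices of $F$ by any plane very close to $X_F$ on the origin side). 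This property is itself $\SL(\Omega)$--invariant because $g(o) = o$, so the argument is consistent.
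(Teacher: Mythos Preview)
Your argument is correct and is in fact more direct than the paper's. The paper reformulates ``$\widetilde p$ lies below $F$'' as the geometric condition that the segment $\widetilde p\,\widetilde f_2$ meets the interior of the triangle $o\,\widetilde f_1\,\widetilde f_3$ (for a suitable labelling of the vertices of $F$), observes that this is a statement about convex combinations, and then notes that convex combinations are preserved by any $g\in\mathrm{GL}_3(\R)$. You bypass this reformulation entirely: since $g$ is linear it maps the plane $X_F$ to $X_{gF}$, induces a bijection on the connected components of the complements, and fixes the origin, so the relation ``same side as $o$'' is carried straight across. Your route is shorter; the paper's reformulation has the mild advantage that it makes explicit a concrete combinatorial criterion (intersection of a segment with a triangle through $o$) which is closer in spirit to the pictures used later in the paper.

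One small correction to your closing remark: $F$ need not be a face of the convex hull $C$ --- in the algorithm $F$ is a face of the current $\Gamma$--invariant polyhedron, which is generally not convex --- and the claim that $o$ lies in the interior of $C$ is not the right reason $X_F$ misses the origin. The correct reason is that the three vertices $\widetilde f_1,\widetilde f_2,\widetilde f_3$ of $F$ project to three distinct points of $\partial\Omega$, and since $\partial\Omega$ contains no line segment these three points are not collinear in $\RP$; hence $\widetilde f_1,\widetilde f_2,\widetilde f_3$ are linearly independent in $\R^3$ and their affine span cannot contain $o$. This is the justification you want, and it is automatically $\SL(\Omega)$--invariant.
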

\begin{proof}
Let the face $F$ have vertices $\widetilde f_1, \widetilde f_2, \widetilde f_3$ on the light-cone $\mathcal L$, and let their projections onto the boundary $\partial \Omega$ be $f_1, f_2, f_3$ respectively. Let the projection of $\widetilde p$ onto $\partial \Omega$ be $p$ and without loss of generality assume $p, f_1, f_2, f_3$ are in clockwise order around $\partial \Omega$.

Let the segments $pf_2$ and $f_1f_3$ intersect at point $x$. Then $x \in \Omega$ since $\Omega$ is strictly convex. Since $p, x, f_2$ are collinear, the rays through $p, x, f_2$ are coplanar, so segment $\widetilde p \widetilde  f_2$ passes through a point $\lambda x,\, \lambda \in \R^+$. Similarly, segment $\widetilde f_1 \widetilde  f_3$ passes through a point $\mu x,\, \mu \in \R^+$, but $\mu > \lambda$ since this is the only case where $\widetilde p$ is below face $F$. In particular this means that the segment $\widetilde p \widetilde f_2$ intersects the interior of triangle $o \widetilde f_1 \widetilde f_3$, where $o$ is the origin (see Figure~\ref{gamma invariance}).

\begin{figure}
	\centering 
	\includegraphics[width=.55\textwidth]{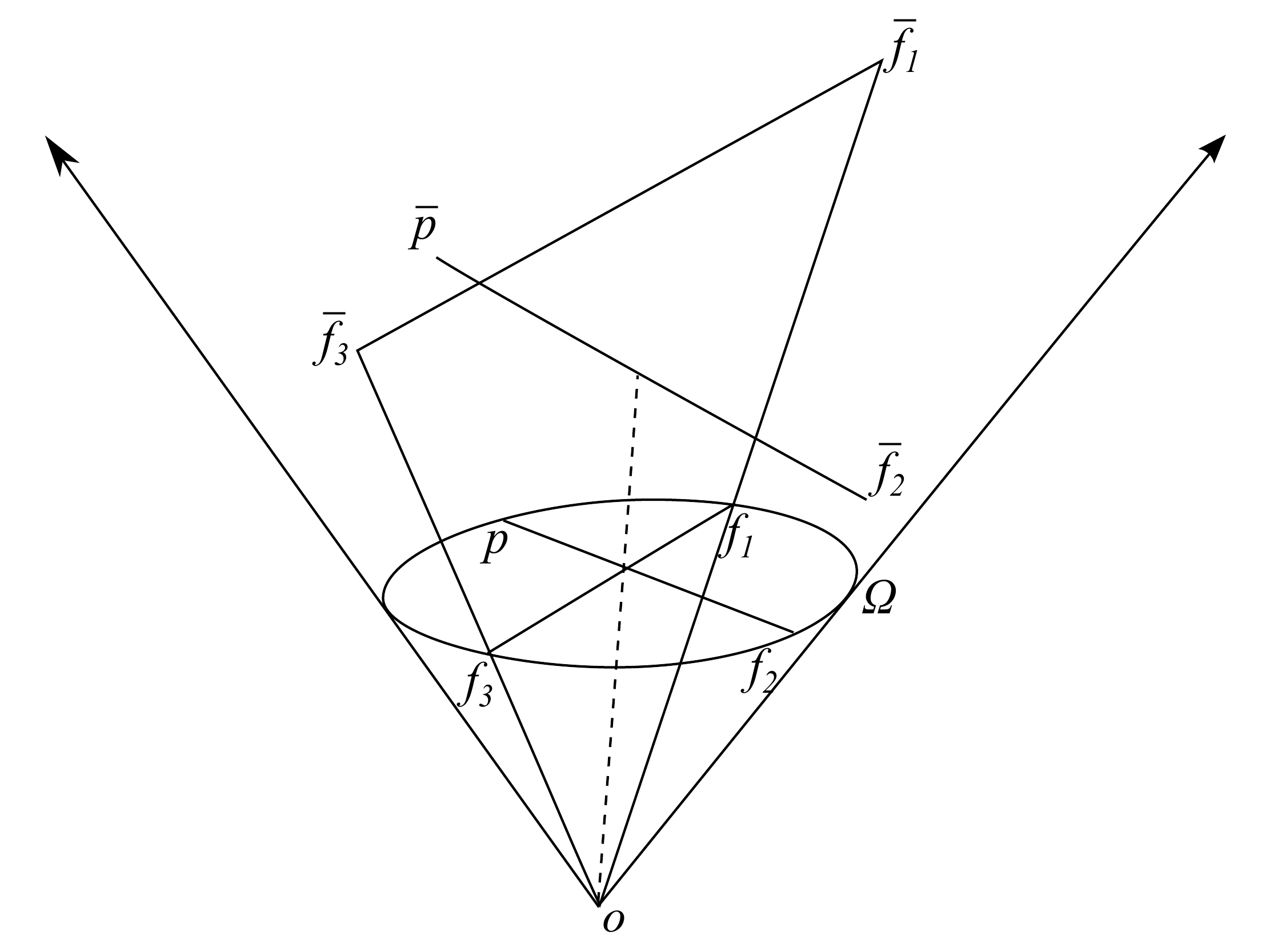} 
	\caption{
	\label{gamma invariance}
	} 
\end{figure}

Hence, $p$ lying below face $F$ is equivalent to the segment $\widetilde p \widetilde f_2$ intersecting the interior of triangle $o \widetilde f_1 \widetilde f_3$ for some ordering $\{\widetilde f_1, \widetilde f_2, \widetilde f_3\}$ of the vertices of $F$. However, the intersection point of the segment and the triangle can be written both as a convex combination of $\widetilde p, \widetilde f_2$ and a convex combination of $o, \widetilde f_1, \widetilde f_3$. This property is linear and preserved by any linear transformation $g \in GL_3(\R)$. 
Hence, if $g \in \SL(\Omega)$, then $g$ preserves the light-cone $\mathcal L$ and $gF$ has its vertices on $\mathcal L$. Moreover, since the convex combination property is linear and preserved by $g$, $\widetilde p$ lies below face $F$ if and only if $g \widetilde p$ lies below $gF$.
\end{proof}



%

\begin{Proposition}
\label{algorithm globally convex}
If polyhedron $P$ is locally convex at every face class, then the polyhedron is globally convex. In particular, if for every face $F$, $v$ lies above face $F$ for every neighbouring vertex $v$ of $F$, then for every face $F$, $u$ lies above face $F$ for every vertex $u$ of the entire polyhedron $P$.
\end{Proposition}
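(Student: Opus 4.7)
The plan is to argue by contradiction: suppose some vertex $u$ of $P$ lies strictly below some face $F_0$, and derive a contradiction by reducing to planar convex geometry in a $2$-plane through the origin $o$. Fix any face $F_n$ of $P$ incident to $u$, and pick generic interior points $c \in F_0$ and $d \in F_n$. The radial projection $\pi$ from $o$ sends $c, d$ to interior points of the triangles $\pi(F_0), \pi(F_n)$ in the triangulation of $\Omega$. Connect $\pi(c)$ to $\pi(d)$ by a straight projective segment $\gamma \subset \Omega$; by genericity, $\gamma$ crosses the $1$-skeleton of the triangulation transversely in the interior of finitely many edges, so it sweeps through a finite chain of triangles whose lifts give a finite sequence of faces $F_0, F_1, \ldots, F_n$ of $P$ with consecutive entries sharing an edge $e_i$.

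Next I lift $\gamma$ via the homeomorphism $\pi|_P\map{P}{\Omega}$ to a piecewise linear curve $\bar\gamma$ on $P$ from $c$ to $d$, whose $i$-th segment lies on $F_i$ and whose corners $p_i$ lie on the edges $e_i$. Because $\gamma$ is a projective line segment avoiding $o$, the lift $\bar\gamma$ sits inside the $2$-plane $\Pi$ spanned by $\{o, c, d\}$. On either side of $p_i$ the tangent line to $\bar\gamma$ is $\mu_{i-1} := X_{F_{i-1}} \cap \Pi$ or $\mu_i := X_{F_i} \cap \Pi$. Local convexity of the face classes of $F_{i-1}$ and $F_i$, combined with Proposition~\ref{gamma invariant}, forces $o$ to lie on the concave side of the wedge $F_{i-1} \cup F_i$; intersecting with $\Pi$ puts $o$ into the angular sector at $p_i$ bounded by $\mu_{i-1}$ and $\mu_i$ on their respective origin-sides, and this sector has angle at most $\pi$ as a planar cross-section of a convex $3$D wedge. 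Hence the corner of $\bar\gamma$ at $p_i$ is convex relative to $o$.

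The radial image of $\bar\gamma$ is the segment $\gamma$, which subtends an angle strictly less than $\pi$ at $o$, so the angular span of $\bar\gamma$ about $o$ in $\Pi$ is less than $\pi$. I then invoke the elementary planar fact that a piecewise linear curve whose angular span about a fixed point is less than $\pi$ and whose every corner is locally convex relative to that point lies entirely on the side of the tangent line to any one of its segments opposite the fixed point. Applying this with the initial segment, which lies on $\ell_0 := X_{F_0} \cap \Pi$, forces $d$ to lie on the side of $\ell_0$ away from $o$, or on $\ell_0$; equivalently $d$ lies above $X_{F_0}$ or is coplanar with $F_0$. Letting $d$ range over a dense set of generic interior points of $F_n$ and using that the condition ``above $F_0$ or coplanar with $F_0$'' is closed, the same conclusion extends to all of $\overline{F_n}$, in particular to $u$. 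This contradicts $u$ lying strictly below $F_0$.

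The main obstacle will be the passage from $3$D local convexity at $e_i$ to $2$D local convexity of $\bar\gamma$ at $p_i$ relative to $o$. One must carefully check that the two half-spaces whose intersection defines the concave side of the wedge $F_{i-1} \cup F_i$ both contain $o$, that their intersection with $\Pi$ yields precisely the claimed angular sector at $p_i$ of angle at most $\pi$, and that the two tangent rays of $\bar\gamma$ at $p_i$ bound this sector on the origin side. The planar convexity lemma used afterwards is standard once the angular span is bounded by $\pi$, and the density/closure argument at the end is routine.
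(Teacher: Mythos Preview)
Your argument is correct and complete in outline, but it takes a somewhat different route from the paper. Both proofs begin by projecting to $\Omega$ and walking along the finite chain of triangles from the offending face toward the offending vertex. The paper, however, stays in $\R^3$ and runs the induction in the opposite direction: assuming $\widetilde p$ lies below a face $\widetilde a\widetilde b\widetilde c$, it uses the half-space determined by $o,\widetilde b,\widetilde c$ together with local convexity at the shared edge to show that $\widetilde p$ also lies below the \emph{next} face $\widetilde b\widetilde q\widetilde c$ in the chain toward $p$, and then appeals to minimality of the distance (in triangles) from the face to $p$ for the contradiction. Your approach instead slices by the $2$-plane $\Pi=\mathrm{span}\{o,c,d\}$, reducing the dihedral convexity at each edge to planar convexity of the piecewise-linear section $\bar\gamma$ relative to $o$, and then invokes a global planar support-line lemma. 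What you gain is a clean two-dimensional picture and a statement that is visibly the ``locally convex implies globally convex'' principle for PL curves; what the paper's argument gains is directness --- no genericity choices, no $2$-plane, and no closure/density step at the end, since it works with the vertex $\widetilde p$ itself rather than with nearby interior points. Both arguments ultimately encode the same inequality being pushed along the chain, just in transposed form: the paper propagates ``$\widetilde p$ is below $F_j$'' forward in $j$, while you propagate ``$p_j$ is above $X_{F_0}$'' forward in $j$.
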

\begin{proof}
Suppose that polyhedron $P$ is locally convex at every face, but is not globally convex. There exists a point $\widetilde p$ and a face $\widetilde a \widetilde b \widetilde c$ such that $\widetilde p$ lies below $\widetilde a \widetilde b \widetilde c$. Without loss of generality let the projection of $\widetilde a \widetilde b \widetilde p \widetilde c$ be $a,b,p,c$ in clockwise order around $\partial \Omega$. 

Let the face of $P$ sharing edge $\widetilde b \widetilde c$ with $F$ have vertices $\widetilde b, \widetilde q, \widetilde c$. Since $\widetilde q$ is a neighbouring vertex of $\widetilde a \widetilde b \widetilde c$, $\widetilde q$ lies above the hyperplane passing through $\widetilde a \widetilde b \widetilde c$. In particular, $\widetilde p \widetilde q$ lie on opposite sides of the faces $\widetilde a \widetilde b\widetilde c$ (see Figure~\ref{globally convex}).

\begin{figure}
	\centering 
	\includegraphics[width=.55\textwidth]{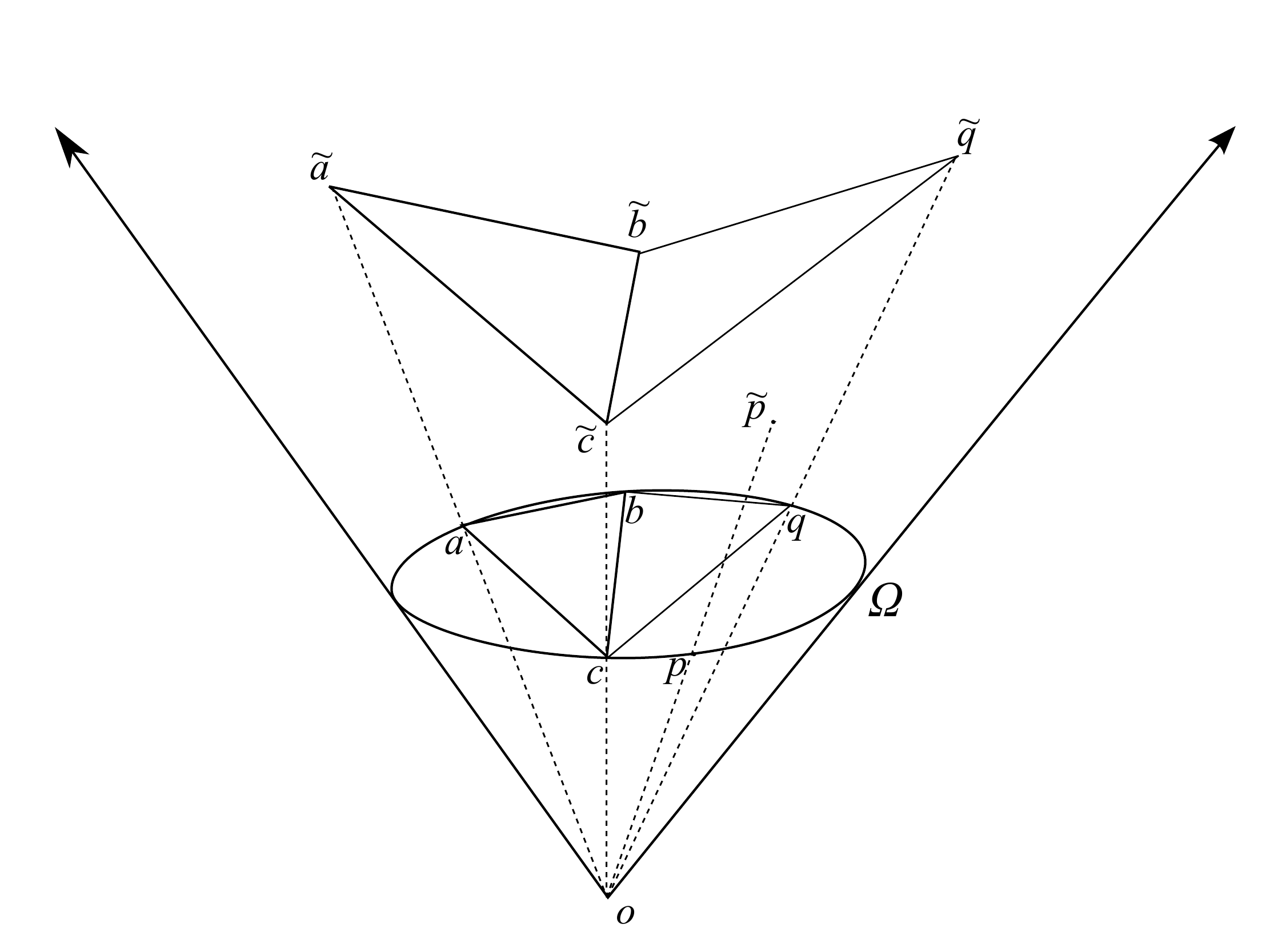} 
	\caption{
	\label{globally convex}
	} 
\end{figure}

Consider hyperplanes through $\widetilde a \widetilde b \widetilde c$ and $\widetilde b \widetilde q \widetilde c$, which  intersect along the line $\widetilde b \widetilde c$. Let the hyperplane through $o \widetilde b \widetilde c$ divide $\R^3$ into two half-spaces, with $T^+$ including $\widetilde p, \widetilde q$ and $T^-$ including $\widetilde a$. Then $\widetilde a \widetilde b \widetilde c$ lies above $\widetilde b \widetilde q \widetilde c$ in the half-space $T^-$, whereas $\widetilde b \widetilde q \widetilde c$ lies above $\widetilde a \widetilde b \widetilde c$ in the half-space $T^+$. Hence, $\widetilde b \widetilde q \widetilde c$ lies above $\widetilde a \widetilde b \widetilde c$ which lies above $p$ in the half-space $T^+$, so $\widetilde p$ lies below $\widetilde b \widetilde q \widetilde c$.

Hence, we have $\widetilde p$ lying below $\widetilde b,\widetilde q,\widetilde c$ instead of $\widetilde a \widetilde b \widetilde c$, where $\widetilde b \widetilde q \widetilde c$ is ``closer'' to $\widetilde p$ than $\widetilde a \widetilde b \widetilde c$. Note that ``closer'' is well defined, as there is a unique path of triangles from $\widetilde a \widetilde b \widetilde c$ to $\widetilde p$ (this is clear if we project onto $\Omega$). Also the path of triangles has finite length. Hence, if we initially assume that $\widetilde p$ is a point which lies below $P$, and $\widetilde a \widetilde b \widetilde c$ is the closest face to $\widetilde p$ such that $\widetilde p$ is below it, then we have another triangle with $\widetilde p$ below it, contradicting the minimality assumption. Hence, if $P$ is locally convex at every face, then $P$ is globally convex.
\end{proof}

\begin{Theorem}
\label{algorithm correctness}
If Algorithm 1 terminates, the output $P$ is the convex hull of the orbit $B$.  
\end{Theorem}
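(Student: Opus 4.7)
The plan is to combine Proposition \ref{algorithm globally convex} with two structural observations: the vertex set of $P$ equals $B$ throughout the algorithm, and the radial projection of $P$ to $\Omega$ is a $\Gamma$--invariant cell decomposition at every stage.

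First, I would show that $P$ is globally convex. Since Algorithm 1 has terminated, the flipping loop over face classes has ended without performing any further edge flips, so every face class is locally convex. By Proposition \ref{gamma invariant}, local convexity is $\Gamma$--equivariant, so every face of the $\Gamma$--invariant polyhedron is locally convex. Proposition \ref{algorithm globally convex} then implies that for every face $F$ and every vertex $u$ of $P$, the vertex $u$ lies above $F$ or is coplanar with $F$. The cleanup step only deletes edges between coplanar faces without moving vertices or changing the underlying piecewise--linear surface, so this property persists for the final output.

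Next, I would argue that each face of $P$ is contained in a face of $\partial C$. The vertex set of $P$ lifts to $B$ initially, and neither edge flips (which only permute adjacencies among existing vertices) nor the cleanup step (which only removes edges) alter this. Combined with global convexity, the plane $X_F$ of every face $F$ of the output $P$ separates the origin from $B$ weakly, so it is a supporting hyperplane of $C := \operatorname{conv}(B)$ with the origin on the opposite side. Hence $F \subseteq G_F := X_F \cap C$, and $G_F$ is a face of $\partial C$ facing the origin.

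Finally, I would upgrade this containment to equality using the cell decomposition structure. The radial projection of $P$ to $\Omega$ is a $\Gamma$--invariant cell decomposition at every stage: edge flips are elementary moves on such decompositions, and cleanup merges pairs of coplanar adjacent cells into polygonal cells. Cooper and Long's construction shows that the portion of $\partial C$ facing the origin also radially projects injectively onto $\Omega$, giving a $\Gamma$--invariant cell decomposition with vertex set $B$. Since radial projection from this part of $\partial C$ is a bijection onto $\Omega$, any face of $P$ whose projection lies within the projection of $G_F$ must itself lie on the plane $X_F$. All such coplanar faces of $P$ triangulate the convex polygon $G_F$, hence form a connected subgraph in the face adjacency graph of $P$, and the cleanup step iteratively merges them into a single polygonal face equal to $G_F$. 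Consequently the faces of the final $P$ are precisely the faces of $\partial C$ facing the origin, and $P$ coincides with (the relevant boundary of) the convex hull $C$ of $B$. The hardest part is this last step: verifying that coplanar faces on a given plane $X_F$ form a connected region in the adjacency graph and that the radial projection of $P$ remains a cell decomposition after each elementary move. These technicalities rest on strict convexity of $\Omega$ (ensuring injectivity of the radial projection on the relevant portion of $\partial C$) and on the combinatorial fact that edge flips and coplanar merges preserve the topology of the cell decomposition.
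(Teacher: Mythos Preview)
Your proposal is correct and follows the same route as the paper: termination gives local convexity at every face class, Proposition~\ref{algorithm globally convex} upgrades this to global convexity, and the cleanup step then removes redundant edges to yield $\partial C$. The paper's own proof is considerably terser---it passes directly from ``$P$ is convex'' to ``the union of the faces of $P$ equals $\partial C$'' without spelling out the supporting-hyperplane and radial-projection arguments you supply---so your version fills in details the authors leave implicit, but the underlying strategy is identical.
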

\begin{proof}
For each face class $F \in P/\Gamma$, Algorithm 1 checks that the polyhedron is locally convex at $F$. Hence $P$ is locally convex at every face, and so by Proposition~\ref{algorithm globally convex}, $P$ is convex. Hence the union of the faces of $P$ is equal to $\partial C$. After the cleanup step, there will be no redundant edges and $P = C$.
\end{proof}

\begin{Theorem}
\label{algorithm finite}
Algorithm 1 terminates in finitely many iterations.
\end{Theorem}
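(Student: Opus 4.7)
The plan is to exhibit a monovariant $\Phi$ on $\Gamma$--invariant polyhedra that strictly decreases under each admissible flip, and then to argue that only finitely many polyhedra can carry $\Phi$--values below a given bound, forcing termination.

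For a face $F$ with light-cone vertices $\tilde f_1,\tilde f_2,\tilde f_3$, set
\[V(F) \;=\; \tfrac{1}{6}\bigl|\det(\tilde f_1,\tilde f_2,\tilde f_3)\bigr|,\]
the Euclidean volume of the tetrahedron with apex at the origin $o$ and base $F$. Since every $g\in\Gamma\subset\SL(3,\R)$ has $|\det g|=1$, the quantity $V$ is constant on each face class (this mirrors the linearity argument of Proposition~\ref{gamma invariant}), so
\[\Phi(P) \;=\; \sum_{\Gamma\cdot F\in P/\Gamma} V(F)\]
is a well-defined finite sum of $-2\chi(S)$ strictly positive terms by Lemma~\ref{ideal triangulation}.

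The first step is to verify that each admissible edge flip strictly decreases $\Phi$. A flip at $F=\{a,b,c\}$ with neighbouring vertex $v$ replaces the pair of face classes $\{a,b,c\},\{a,b,v\}$ by $\{c,v,a\},\{c,v,b\}$, so only two summands in $\Phi$ change. A direct computation of signed volumes, combined with the admissibility hypothesis that $v$ and $o$ lie strictly on the same side of the plane through $\{a,b,c\}$ (so that the quadruple is not coplanar), shows that $\Phi(P)-\Phi(P')$ is strictly positive. Geometrically, the flip removes a concave ridge of the polyhedron and brings the two affected faces strictly closer to the origin.

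The second and harder step is to rule out an infinite strictly decreasing sequence of $\Phi$-values with vanishing decrements. A lower bound for $\Phi$ is the finite Euclidean volume enclosed between $o$ and the convex hull $C$ per fundamental domain for $\Gamma$, but this alone permits infinite descent with shrinking steps. To preclude this, the key claim is that for each $M>0$ only finitely many face classes satisfy $V(F)\le M$; granted this, only finitely many triangulations of $S$ can have $\Phi\le \Phi(P_0)$, and strict monotonicity of $\Phi$ then forces the algorithm to halt. The main obstacle is this finiteness claim. My approach is to place a representative of each candidate face class so that one of its three light-cone vertices lies in a fixed fundamental domain for $\Gamma$ acting on the union of cusp orbits, then combine the discreteness of $B$ in $\R^3$ with the strict convexity of $\Omega$ (which prevents three points of $B$ from being projectively collinear through $o$, and hence prevents tetrahedra of bounded volume from being arbitrarily long and thin) to confine the remaining two vertices to a finite subset of $B$.
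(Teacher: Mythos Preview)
Your monovariant $\Phi$ is well-defined and your first step is correct: an admissible flip at the quadrilateral $a,c,b,v$ (cyclic order on $\partial\Omega$) replaces the concave tent $\triangle abc\cup\triangle abv$ by the convex tent $\triangle acv\cup\triangle bcv$, and the drop in $\Phi$ is exactly the Euclidean volume of the tetrahedron $abcv$, which is strictly positive since $v$ is strictly below $\triangle abc$. So far so good.

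The gap is in your second step. Because $\Phi$ is real-valued, strict decrease by itself does not give termination; you need your finiteness claim, and the sketch you give does not establish it. Strict convexity of $\Omega$ rules out three points of $B$ being \emph{exactly} projectively collinear, but it does nothing to prevent triples from being \emph{arbitrarily close} to collinear---indeed the parabolic fixed points are dense in $\partial\Omega$, so the unit directions $\hat q,\hat r$ of points $q,r\in B$ can be made as close as one likes to any prescribed direction. What you would really need is a quantitative trade-off showing that whenever $\hat p,\hat q,\hat r$ are $\varepsilon$-close to collinear the norms $|q|,|r|$ must be so large that $|p|\,|q|\,|r|\cdot|\det(\hat p,\hat q,\hat r)|$ still blows up. That is a genuine estimate about the geometry of the $\Gamma$-orbit on the light cone (essentially a lower bound on lambda-length products over all ideal triangles), not something that falls out of ``discreteness plus strict convexity''. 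Your parenthetical assertion that these two ingredients ``prevent tetrahedra of bounded volume from being arbitrarily long and thin'' is precisely the statement to be proved, and you have not proved it.

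The paper sidesteps this difficulty entirely by choosing an \emph{integer-valued} monovariant: the height of a face $F$ is the number of points of $B$ strictly below the plane of $F$ (finite because $B$ is discrete and the origin is not an accumulation point), and the height of $P$ is the sum over face classes. An admissible flip strictly decreases this integer, so the algorithm halts. No separate compactness or finiteness lemma is required. If you want to salvage your volume approach you would have to supply the missing properness estimate; otherwise, switching to a counting invariant as the paper does is both simpler and complete.
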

\begin{proof}
Let $B$ be the $\Gamma$--invariant vertices of $P$. Define the height of a face $F$ to be the number of points in $B$ below the face. The height of any face is finite since $B$ is discrete, in particular 0 is not an accumulation point. Moreover, the height of a face is $\Gamma$--invariant by Proposition~\ref{gamma invariant}. Hence, we can define the height of the polyhedron $P$ to be the sum of the heights of its face classes $\Gamma \cdot F \in P/\Gamma$.

To prove that Algorithm 1 terminates, it suffices to show that the height of $P$ strictly decreases after every edge flip, since the height of $P$ is always a non-negative integer.

Consider an edge flip $\widetilde a \widetilde c \to \widetilde b \widetilde d$. This occurs only if $\widetilde c$ lies above $\widetilde a \widetilde b \widetilde d$ and $\widetilde a$ lies above $\widetilde b \widetilde c \widetilde d$. Without loss of generality let their projections onto $\partial \Omega$ be $a,b,c,d$ in clockwise order. Let $\widetilde p$ be a point on the light-cone and without loss of generality let the projections $p,a,b,c,d$ be in clockwise order. 

Suppose $\widetilde p$ is below $\widetilde a \widetilde b \widetilde d$. Then consider the triangles $\widetilde a \widetilde b \widetilde d$ and $\widetilde a \widetilde b \widetilde c$. The hyperplane through $o, \widetilde a, \widetilde b$ divides $\R^3$ into two half-spaces, consider only the halfspace which includes $\widetilde c, \widetilde d, \widetilde p$. In this halfspace, we know by local convexity that triangle $\widetilde a \widetilde b \widetilde c$ is above $\widetilde a \widetilde b \widetilde d$, which in turn is above $\widetilde p$. Hence, $\widetilde p$ is below  $\widetilde a \widetilde b \widetilde d$ implies that $\widetilde p$ is also below $\widetilde a \widetilde b \widetilde c$ (see Figure~\ref{finite steps}).

\begin{figure}
	\centering 
	\includegraphics[width=.55\textwidth]{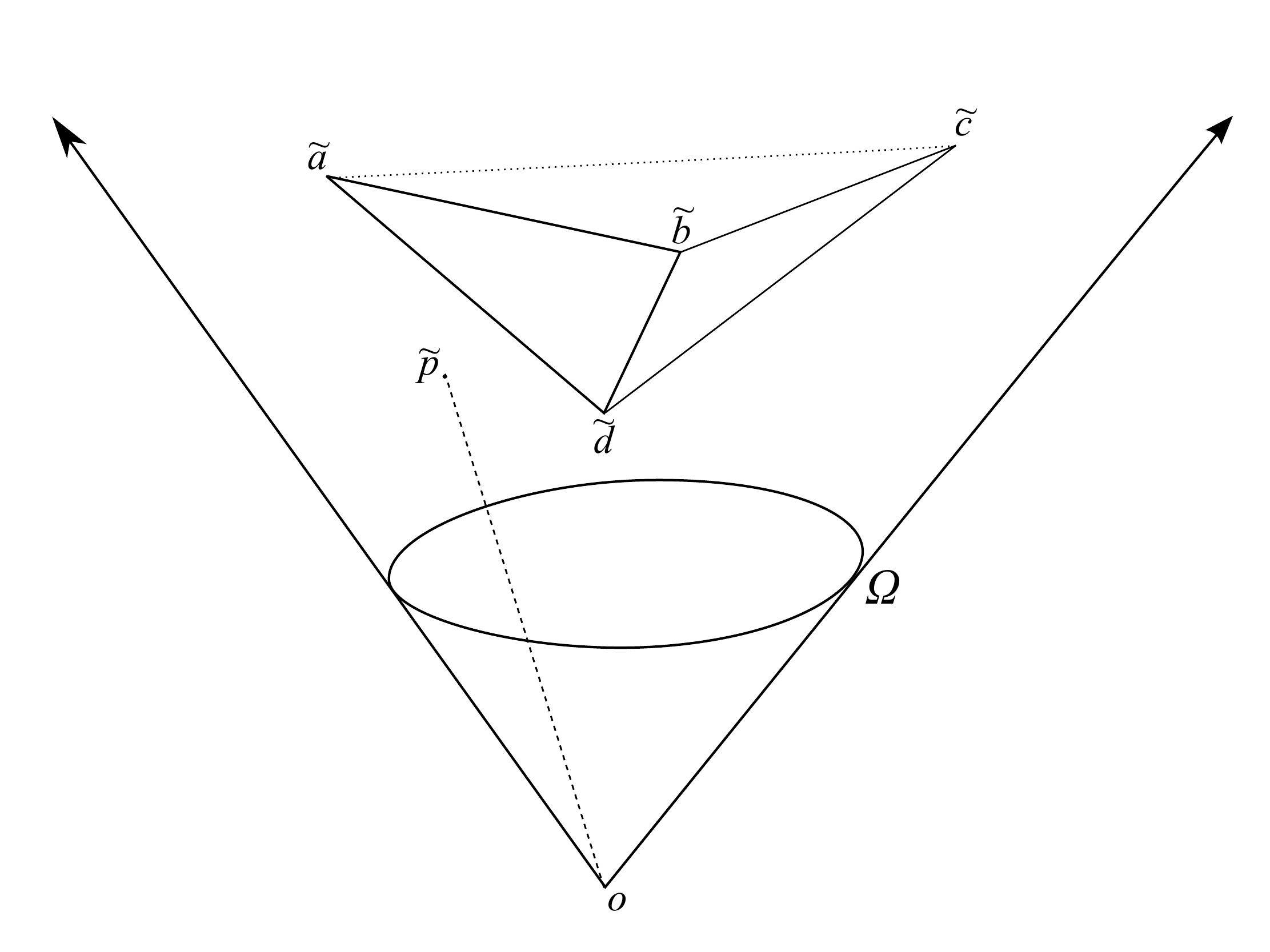} 
	\caption{
	\label{finite steps}
	} 
\end{figure}

Hence, when only considering points with projection between $d,a$ in the strictly convex domain $\Omega$, the points below $\widetilde a \widetilde b \widetilde d$ is a subset of points below $\widetilde a \widetilde b \widetilde c$. By exactly the same argument the points below $\widetilde b \widetilde c \widetilde d$ are a subset of points below $\widetilde a \widetilde c \widetilde d$ when considering this particular part on the boundary $\partial \Omega$. An equivalent result is true the four other arcs $ab, bc$ and $cd$ by rotating the edge labels cyclically. However, the points $\widetilde b, \widetilde d$ contribute to the heights of $\widetilde a \widetilde c \widetilde d, \widetilde a \widetilde b \widetilde c$ before the edge flip, whereas no vertices contribute to the heights of the two triangles $\widetilde a \widetilde b \widetilde d, \widetilde b \widetilde c \widetilde d$. Hence, after every edge flip, the height of the two triangles strictly decreases. The heights of all other triangles not involved in the edge flip stay the same, so the overall result is that the height of $P$ strictly decreases after every edge flip.
\end{proof}


\section{The once-punctured torus}

In this section, the Epstein-Penner decompositions of hyperbolic or strictly convex projective structures on the once-punctured torus are computed using Algorithm 1. We consider two settings that are common in the literature: hyperbolic structures parameterised by representations into $\PSL(2,\R)$; and projective structures parameterised by Goldman coordinates. A third setting, the coordinates of Fock and Goncharov~\cite{FG}, can be found in \cite{HT2015}.


\subsection{Hyperbolic structures}


\textbf{Example 1.}
Take the hyperbolic once-punctured torus given by $U / \Gamma$ where $\Gamma = \langle A, B \rangle$,
$$A = \begin{pmatrix} 2 & 1 \\ 1 & 1 \end{pmatrix}, \; B = \begin{pmatrix} 2 & -1 \\ -1 & 1 \end{pmatrix},$$ with  commutator
$$
[A,B] = ABA^{-1}B^{-1} = \begin{pmatrix} -1 & -6 \\ 0 & -1 \end{pmatrix}.
$$
So $[A,B]$ is parabolic with parabolic fixed point $\infty.$ Following Penner~\cite{penner1987decorated}, we identify $\PSL(2,\R)$ with $\SO^+(1,2)$ using its natural action on symmetric bilinear forms, giving
$$
\widetilde A = 
\begin{pmatrix}
\frac{7}{2} & 3 & \frac{3}{2} \\
3 & 3 & 1 \\
\frac{3}{2} & 1 & \frac{3}{2}
\end{pmatrix},\;
\widetilde B = 
\begin{pmatrix}
\frac{7}{2} & -3 & \frac{3}{2} \\
-3 & 3 & -1 \\
\frac{3}{2} & -1 & \frac{3}{2}
\end{pmatrix},
$$
and the commutator $[\widetilde{A},\widetilde{B}]$ fixes $v_p = (1,0,-1).$ Our cusped manifold has a fundamental domain with ideal vertices at $p,$ $ Ap,$ $ Bp,$ $ A Bp$ in the Klein model. An initial cell decomposition of the fundamental domain consists of two triangles $\{p, Ap, Bp\}$ and $\{Ap, Bp, ABp\}$, and is the input to the edge flipping algorithm. This corresponds to a $\Gamma$--invariant polyhedron with two face classes $\{p, \widetilde Ap, \widetilde Bp\}$ and $\{\widetilde Ap, \widetilde Bp, \widetilde A \widetilde Bp\}$. Call this polyhedron $P_1$.

\begin{figure}[h]
	\centering
	\subfigure[$P_1$]
	{\includegraphics[width=3.4cm]{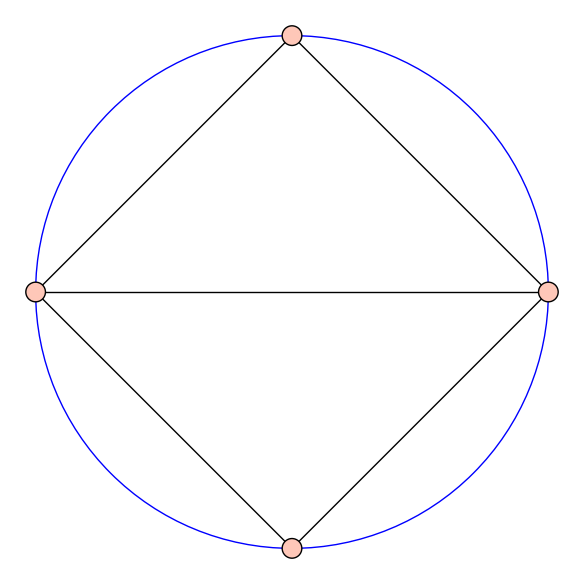}\label{Example 1 P_1 on Gamma}}
	\subfigure[$P_2$]
	{\includegraphics[width=3.4cm]{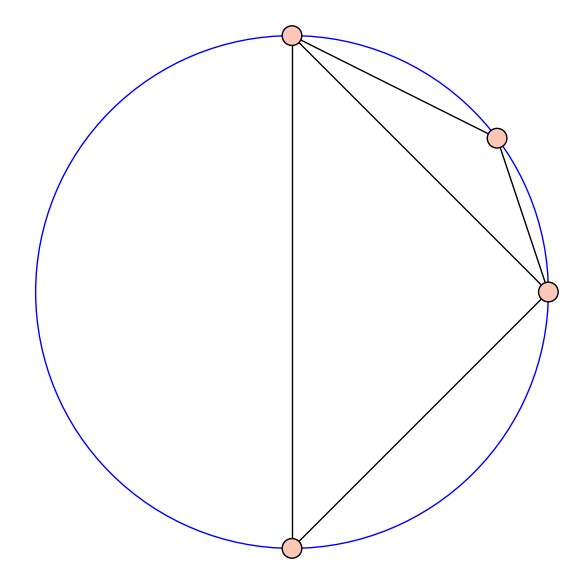}\label{Example 1 P_2 on Gamma}}
	\subfigure[$P_1$]
	{\includegraphics[width=3.4cm]{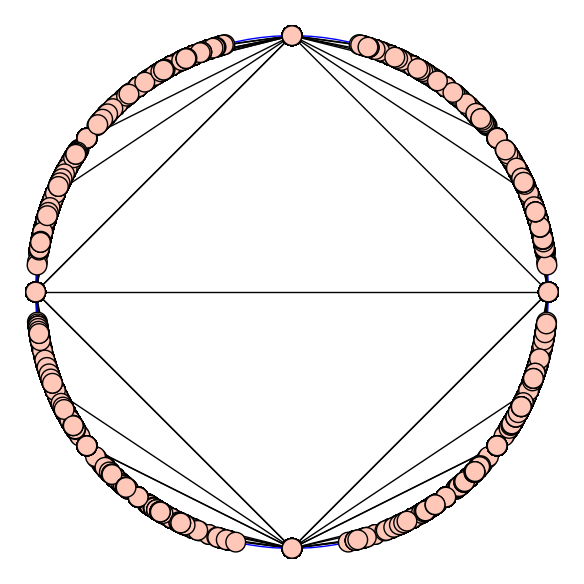}\label{Example 1 P_1}}
	\subfigure[$P_2$]
	{\includegraphics[width=3.4cm]{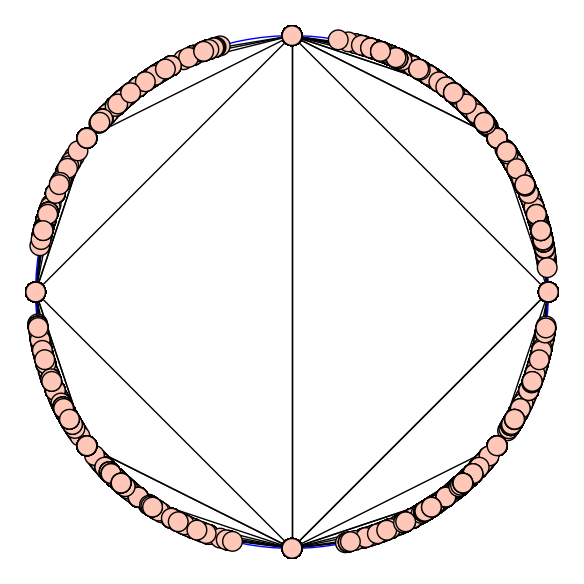}\label{Example 1 P_2}}
\caption{(Example 1) $P_2$ is obtained from $P_1$ by flipping one edge class} 
\end{figure}

The algorithm detects that $\widetilde A\widetilde B v_p$ is below $\{v_p, \widetilde Av_p, \widetilde Bv_p\}$. An edge flip on the triangle $\{\widetilde p,\widetilde  Ap, \widetilde Bp\}$ and its neighbouring vertex $\widetilde A\widetilde Bp$ is performed, giving a new triangulation. The new $\Gamma$--invariant polyhedron is called $P_2$.
The algorithm terminates after one edge flip as all remaining face classes are locally convex. So$P_2$ is the convex hull and induces the canonical cell decomposition of $D$. Fundamental domains for the projections of $P_1$ and $P_2$ onto the Klein model $D$ are shown in Figures \ref{Example 1 P_1 on Gamma} and \ref{Example 1 P_2 on Gamma}, and developed into (part of) cell decompositions of $D$, shown in Figures \ref{Example 1 P_1} and \ref{Example 1 P_2}.


\begin{figure}
	\centering
	 	\subfigure[Projection of fundamental domains of the polyhedra $P_i$ onto the Klein model.]
 		{\includegraphics[width=3.4cm]{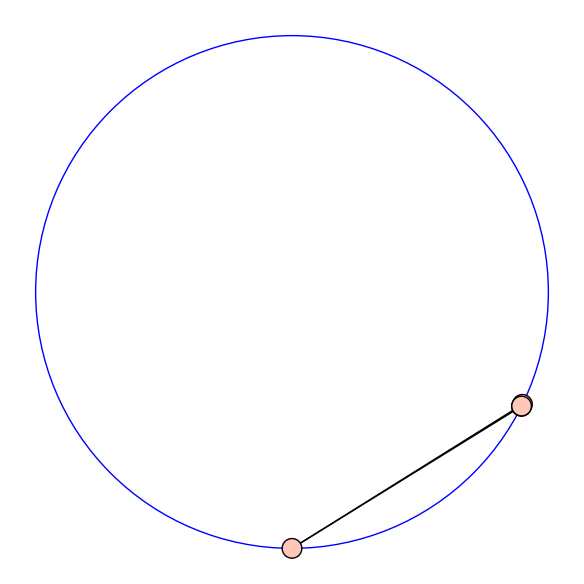} 
		\includegraphics[width=3.4cm]{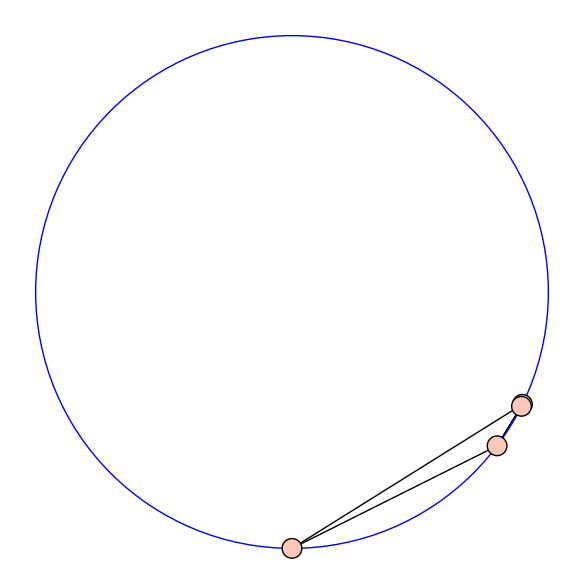} 
		\includegraphics[width=3.4cm]{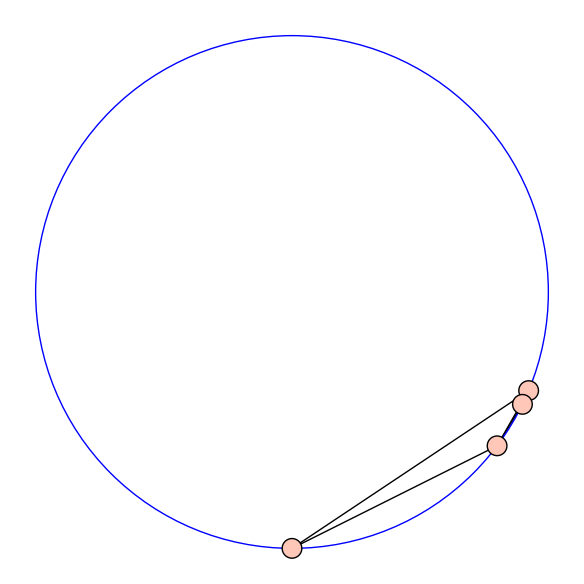} 
		\includegraphics[width=3.4cm]{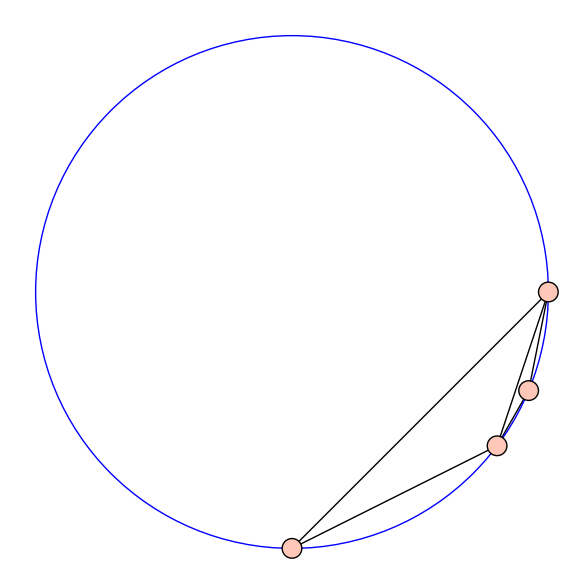}
		\label{hypsecondfun}
		}
		\subfigure[Projection of polyhedra $P_i$ onto the Klein model.]
 		{\includegraphics[width=3.4cm]{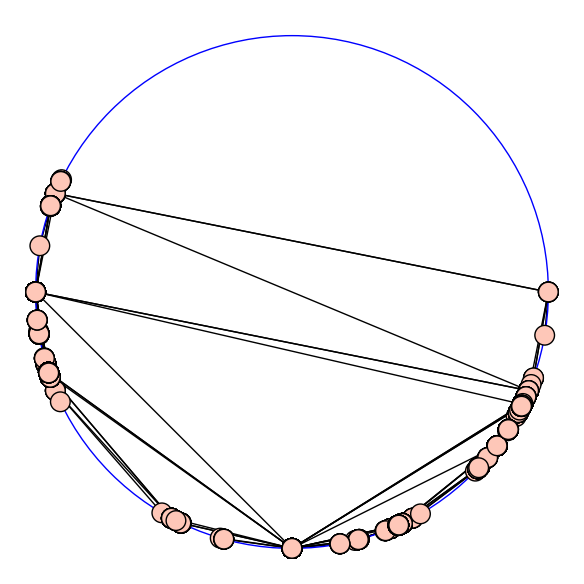} 
		\includegraphics[width=3.4cm]{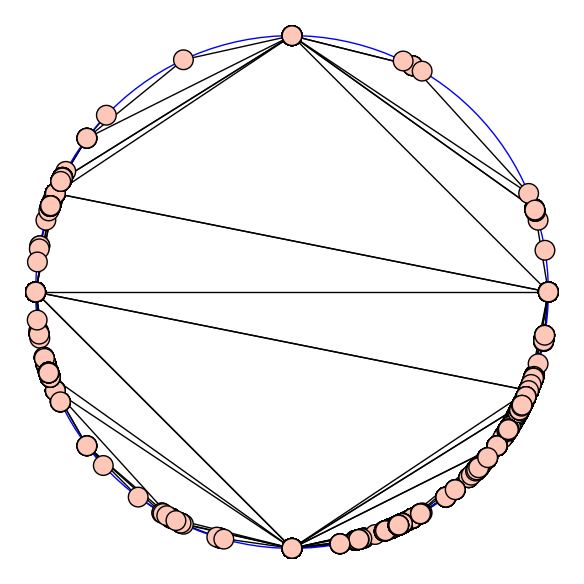} 
		\includegraphics[width=3.4cm]{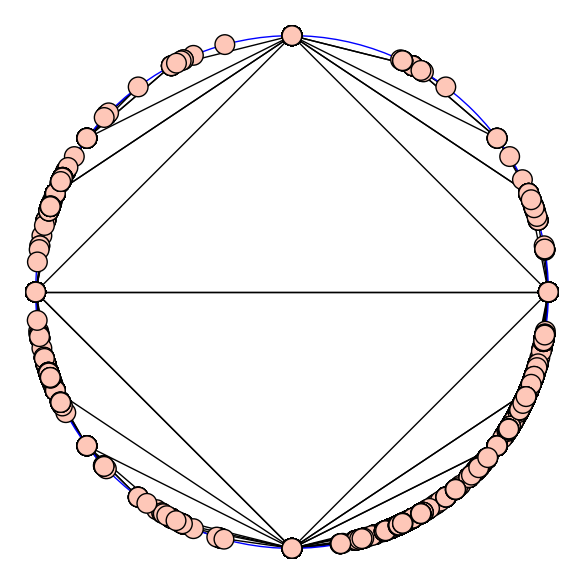} 
		\includegraphics[width=3.4cm]{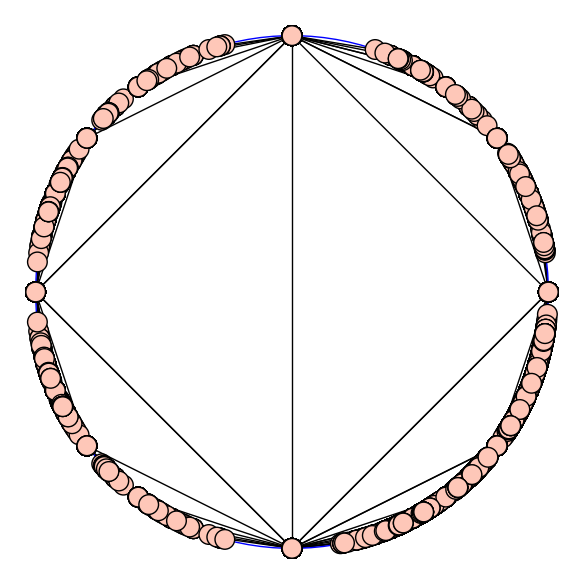}
		\label{hypseconddev}		
		}  
	\caption{(Example 2)  The order in which the polyhedra were visited by the edge flipping algorithm is from left to right.
	\label{hypsecond}
	}
\end{figure}

\textbf{Example 2.}
Take the same once-punctured torus as in Example 1 as a starting point. Let $P_3$ be the polyhedron after applying an edge flip to a \emph{non-admissable} edge. Instead of taking us to the canonical cell decomposition, this takes us further away from it in that there are more edge flips required to get to the convex hull. Continue applying non-admissible edge flips a number of times to give a $\Gamma$--invariant polyhedron $P_n$. Use $P_n$ as the input to Algorithm~1.

Then $P_n$ requires many edge flips for the algorithm to terminate, in fact $P_n$ may need arbitrarily many edge flips. Consider a graph where points are the possible $\Gamma$--invariant polyhedron of the $\Gamma$-orbit of $v_p$, where $v_p = (1,0,-1)$ and $\widetilde \Gamma = \langle \widetilde A, \widetilde B \rangle$. Then there are infinitely many points in this graph, since there are infinitely many choices of a fundamental domain for $D/\Gamma$. Each polyhedron has 3 possible edge flips, so the degree of each vertex in the graph is 3. Hence, there are at most $4^n$ points of distance at most $n$ from the convex hull, so there are infinitely many points with distance $\geq n$ for any $n$. Hence, $P_n$ may need arbitrarily many edge flips to arrive at the convex hull.

An example constructed using this approach is shown in Figure~\ref{hypsecond}, and needs 3 edge flips to reach the convex hull. Since the Epstein-Penner decomposition is canonical, the final cell decomposition is the one computed in Example 1. This is difficult to notice by comparing Figure~\ref{Example 1 P_2 on Gamma} and Figure~\ref{hypsecondfun}, as the fundamental domains of the two figures do not match. However, if we develop the cell decomposition of $D/\Gamma$ to a cell decomposition of $D$, then it is clear that the two cell decompositions do indeed match (see Figure \ref{Example 1 P_2} and Figure~\ref{hypseconddev}).


\begin{figure}[h]
\centering
 	\subfigure[]
 		{\includegraphics[width=3.6cm]{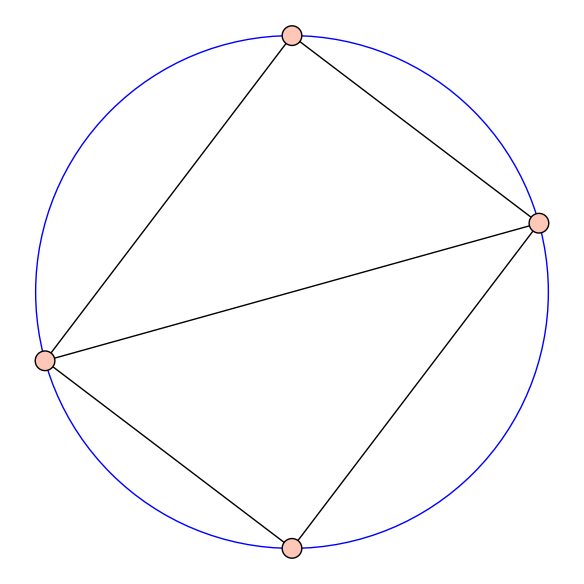}\label{quadminus}}
  	\qquad\qquad
     	\subfigure[]
		{\includegraphics[width=3.6cm]{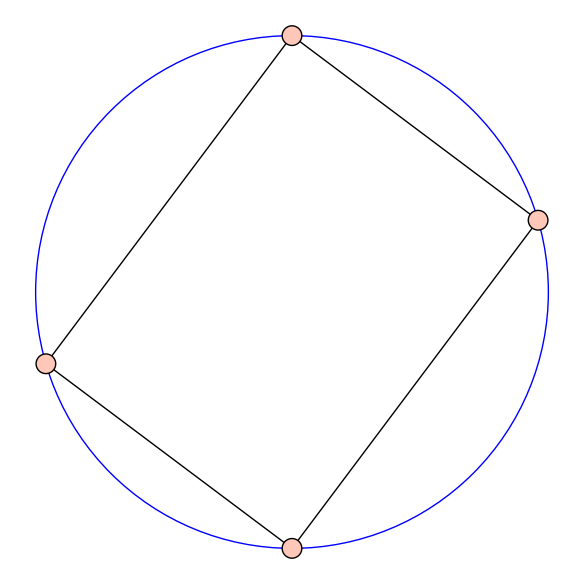}\label{quadflat}}
     	\qquad\qquad
	\subfigure[]
		{\includegraphics[width=3.6cm]{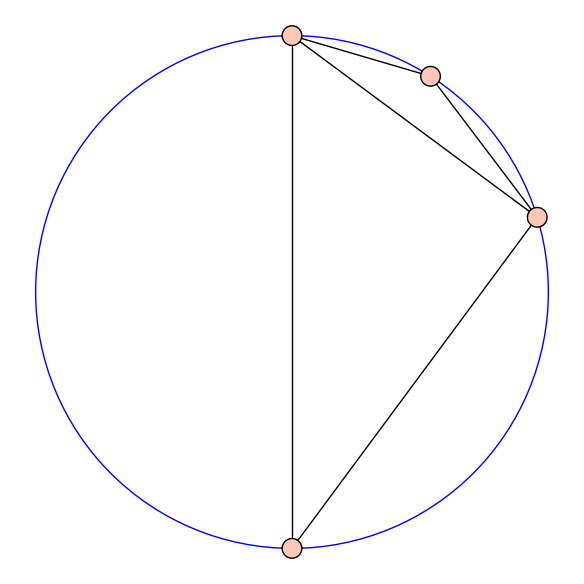}\label{quadplus}}
\caption{(Example 3) A critical point of the parameter space, where the canonical cell decomposition has one cell (shown in the middle) and hyperbolic structures close to this point}
\end{figure}

\textbf{Example 3.}
In this example, the convex hull $C$ has non-triangular faces, and hence the cleanup step is necessary. One possibility where $C$ has a non-triangular face is if  $A, B$ can be found such that $p$, $\widetilde Ap$, $\widetilde Bp$ and $\widetilde A\widetilde Bp$ are coplanar in $\R^3$, this results in a single quadrilateral cell in its canonical cell decomposition. Take the 2-parameter space of complete hyperbolic structures on the once-punctured torus as given in \cite{series1999lectures}:
$$
A = \left(\begin{array}{rr}
\frac{z^{2} + 1}{w} & z \\
z & w
\end{array}\right), \,\,
B = \left(\begin{array}{rr}
\frac{w^{2} + 1}{z} & -w \\
-w & z
\end{array}\right), \,\,\text{where }  z,w \in \R.
$$
Then, the points $p$, $\widetilde Ap$, $\widetilde Bp$ and $\widetilde A\widetilde Bp \in \R^3$ can be calculated in terms of $z$ and $w$. Solving for the case where the four points are coplanar gives the relation 
$$z = \sqrt{1- w^2}.$$
The cell decomposition in Figure~\ref{quadflat} corresponds to the parameters $w = 0.6$, $z = \sqrt{1-w^2} = 0.8$. The resulting cell decomposition is a single ideal quadrilateral, as expected.

Figure~\ref{quadminus} is the cell decomposition corresponding to the parameters $w = 0.6$, $z = 0.799$ whereas Figure~\ref{quadplus} is corresponds to the $w = 0.6$, $z = 0.801$. This small perturbation in opposite direction results in two different cell decompositions. This is expected since the vertices of the quadrilateral $\{\widetilde p, \widetilde Ap, \widetilde Bp, \widetilde A\widetilde Bp\}$ are in a degenerate position, so $w = 0.6,\, z=0.8$ corresponds to a ``critical point'' of the parameter space.



\subsection{Projective structures in Goldman coordinates}
\label{projective once-punctured torus}

\begin{figure}[h]
	\centering 
	\includegraphics[width=9cm]{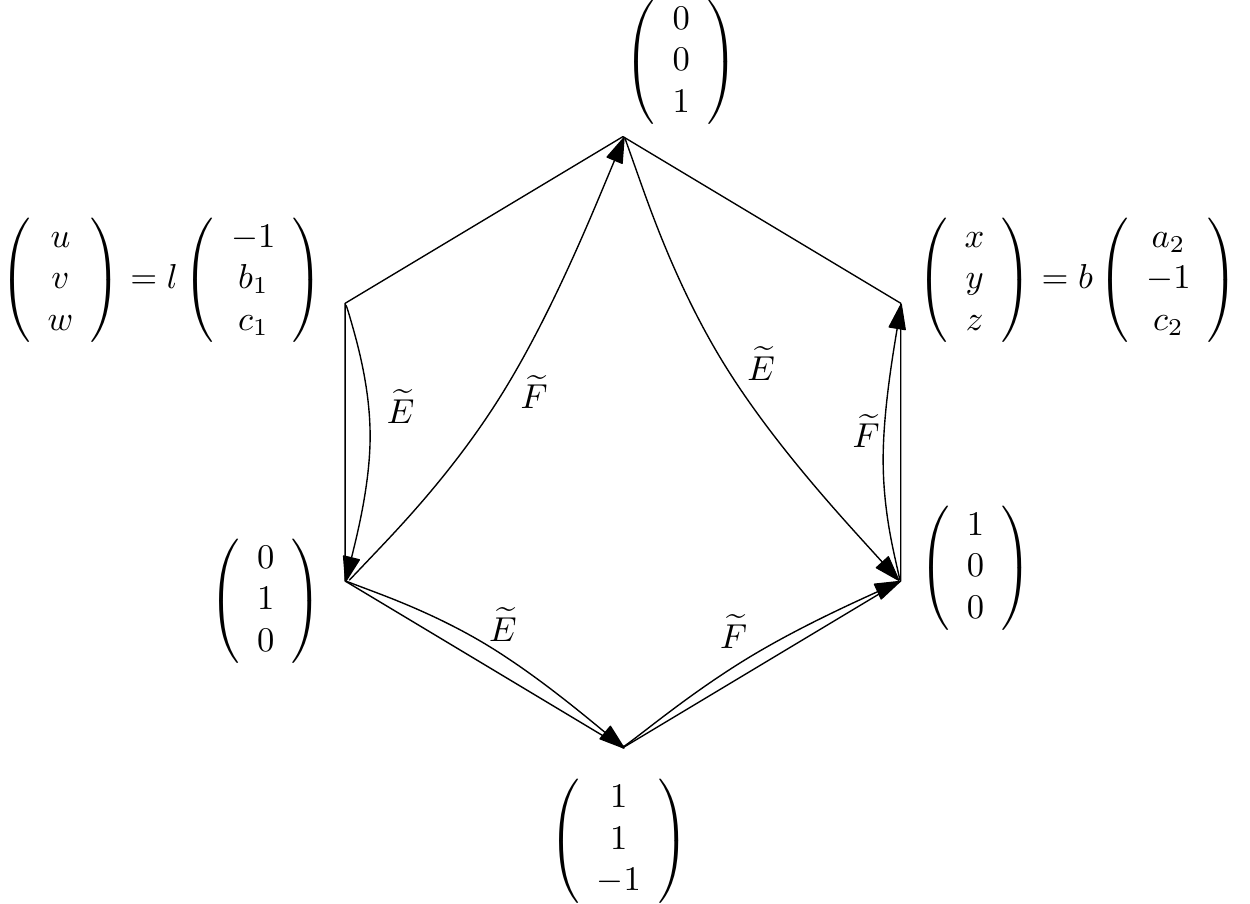} 
	\caption{The action of $\widetilde E$ and $\widetilde F$ on the ideal vertices of $Y$ and two additional points, forming a convex hexagon. 
	\label{marquis hexagon}
	} 
\end{figure}

Following Goldman~\cite{goldman1990convex} and Marquis~\cite{marquis2010espace}, the moduli space of strictly convex projective structures on the once-punctured torus is computed from a fundamental domain an ideal quadrilateral with vertices
\[
\begin{pmatrix} 1 \\ 0 \\ 0 \end{pmatrix},
 \frac 1 {\sqrt 3} \begin{pmatrix} 1 \\ 1 \\ -1 \end{pmatrix},
\begin{pmatrix} 0 \\ 1 \\ 0 \end{pmatrix},
\begin{pmatrix} 0 \\ 0 \\ 1 \end{pmatrix} \in S^2.
\]
The face pairings (cf.\thinspace Figure~\ref{marquis hexagon}) are given by 
\[
\widetilde E 
=
\begin{pmatrix}
ab_1 + \frac {ac_1} {be^2} & a & \frac a {be^2} \\
ab_1 - \frac {be^2} {aa} & a & 0 \\
-ab_1 & -a & 0 \\
\end{pmatrix}
\qquad \text{and} \qquad
\widetilde F 
=
\begin{pmatrix}
a_2b& 0 & a_2b-\frac 1 {be} \\
-b  & 0 & -b \\
c_2b& e & bc_2+e \\
\end{pmatrix}.
\]
Imposing conditions that ensure all structures are convex yields the 6 dimensional semi-algebraic set $\overline{Q}$ consisting of all $(c_1, c_2, b_1, a_2, a, b, e) \in \R^7$ satisfying the inequalities
\[
c_1 > 1,\, c_2 > 1,\, a_2b_1 > 1,\, a, b, e > 0,
\]
and the equation
\[
\begin{aligned}
&a_2 (a^{3} b^{3} b_{1} e^{6} - b^{4} e^{8} + \left(a^{3} b^{3} c_{1} - a^{3} b^{3}\right) c_{2} e^{3} \\
&\hspace{5cm}
+ \left(a^{6} b^{2} b_{1} c_{1} -
a^{6} b^{2} b_{1} - \left(a^{6} b^{2} b_{1} c_{1} - a^{6} b^{2} b_{1}\right) c_{2}\right) e)\\
=\quad& 
a^{3} b^{3} e^{6} + a^{3} b b_{1} e^{5} - a^{6} b_{1} c_{1} c_{2} - 2 \, a^{3} b^{2} e^{4} - b^{2} e^{7} + a^{6} b_{1} c_{1} 
+ a^{3} b c_{1} c_{2} e^{2}\\
&\hspace{5cm}
 + \left(a^{6} b^{2} c_{1} - a^{6} b^{2} - \left(a^{6} b^{2} c_{1} - a^{6} b^{2}\right) c_{2}\right) e
\end{aligned}
\]
The equation can be solved uniquely for $a_2,$ and we will denote the points in the image of the projection of $\overline{Q}$ onto the remaining coordinates by $Q.$ For each $(c_1, c_2, b_1, a, b, e) \in Q$, there is a strictly convex domain $\Omega \subset \RP$ such that $\Omega^+ / \langle \widetilde E, \widetilde F \rangle$ is homeomorphic to the once-punctured torus. Below examples refer to this parameter space.


\begin{figure}[h]
\centering
 	\subfigure[Sequence of edge flips for the projective structure (8,2,3,7,1,2)]
 		{\includegraphics[width=3.6cm]{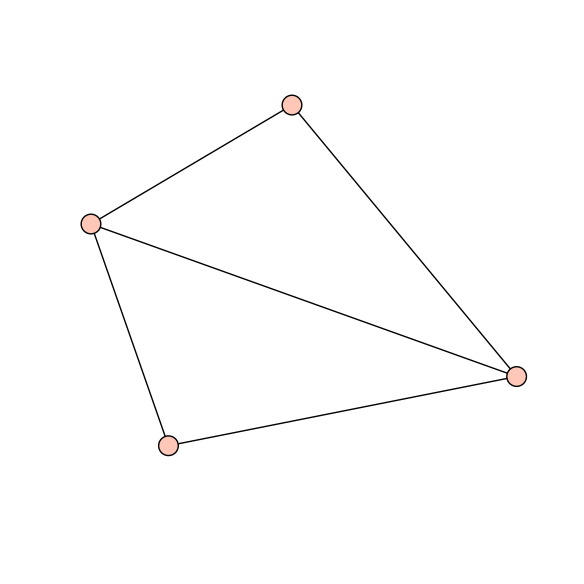} 
	\includegraphics[width=3.6cm]{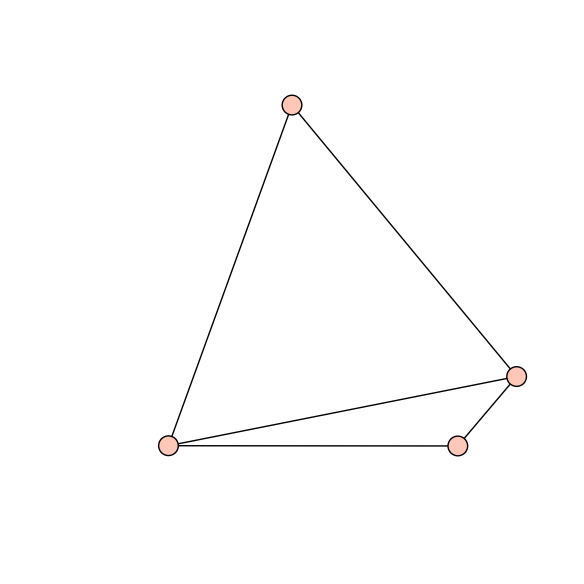} 
	\includegraphics[width=3.6cm]{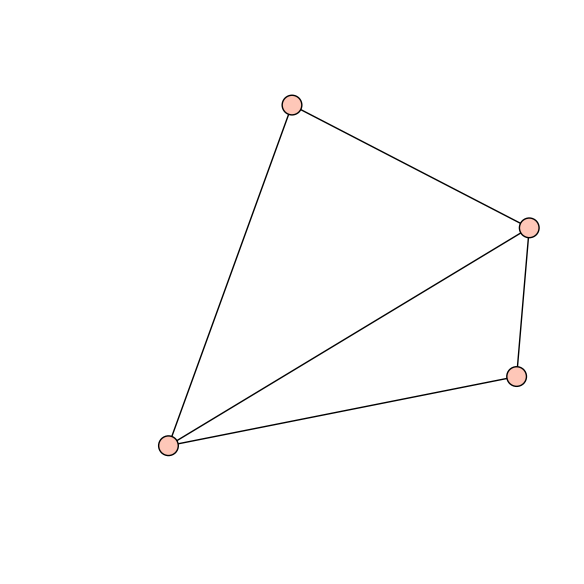} 
	\label{projfirst}
	}
  	\qquad\qquad
     	\subfigure[Developing images of the $\Gamma$--invariant polyhedra]
		{\includegraphics[width=3.6cm]{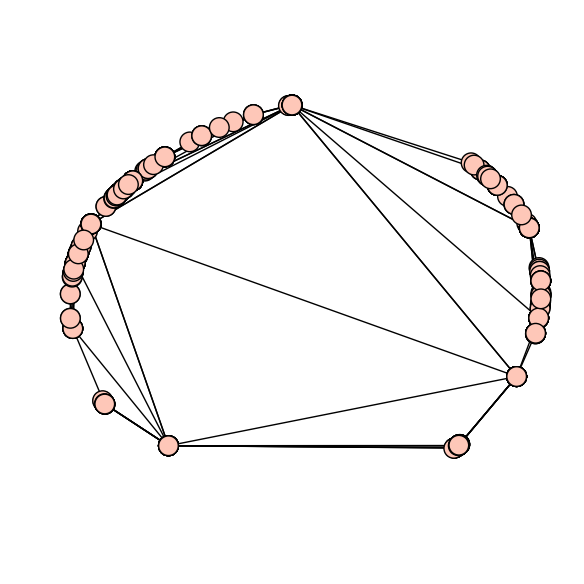} 
	\includegraphics[width=3.6cm]{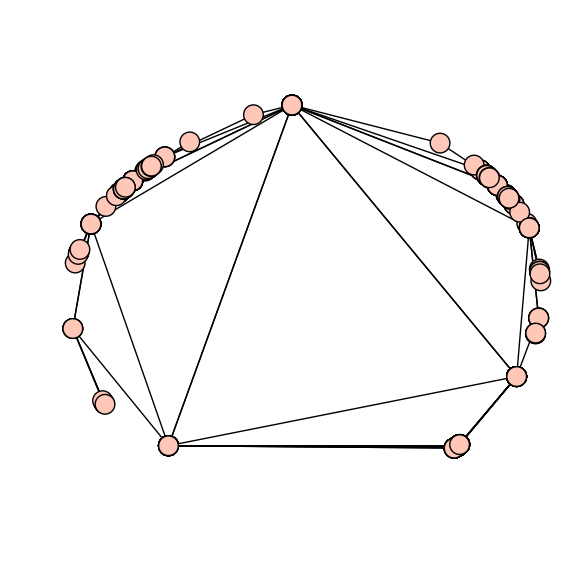} 
	\includegraphics[width=3.6cm]{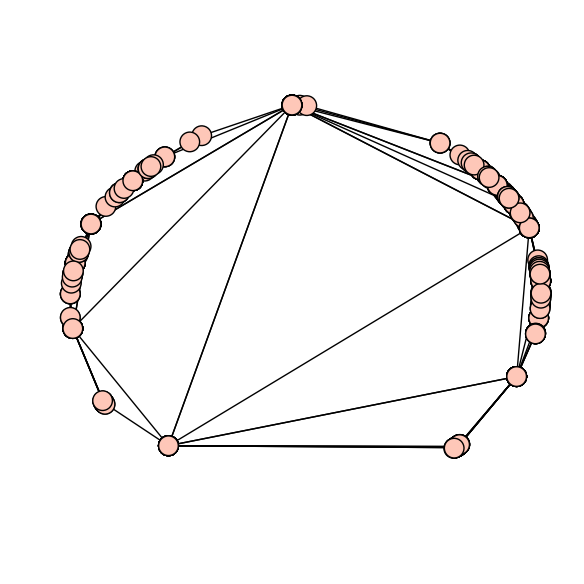}
	\label{projfirstdev}
	}
\caption{(Example 4) $(8,2,3,7,1,2)\in Q$}
\end{figure}

\textbf{Example 4.}
Consider the projective structure corresponding to $(8,2,3,7,1,2) \in Q$. We apply the edge flipping algorithm to find the canonical cell decomposition of $\Omega / \langle \widetilde E, \widetilde F \rangle$. The initial triangulation consists of the two triangles $\{e_1, \widetilde E e_1, \widetilde F e_1\}$ $\{\widetilde E e_1, \widetilde F e_1, \widetilde E \widetilde F e_1\}$. Two edge flips are performed, after which no more neighbouring vertices are below their respective faces, and our algorithm terminates. Figure~\ref{projfirst} show the polyhedra visited by Algorithm 1, in particular, the figure shows its face classes in $S^2$. Figure~\ref{projfirstdev} shows part of the cell decomposition of $\Omega$. The domain $\Omega$ itself is not known. Since the set of parabolic fixed points is dense on $\partial \Omega$, generating many points in the orbit of the cusp gives an approximation to the shape of $\Omega$. In particular, by inspection, the boundary looks strictly convex and has resemblance with an ellipse.


\begin{figure}[h]
\centering
     	\subfigure[Intermediate cell decompositions]
		{\includegraphics[width=4.3cm]{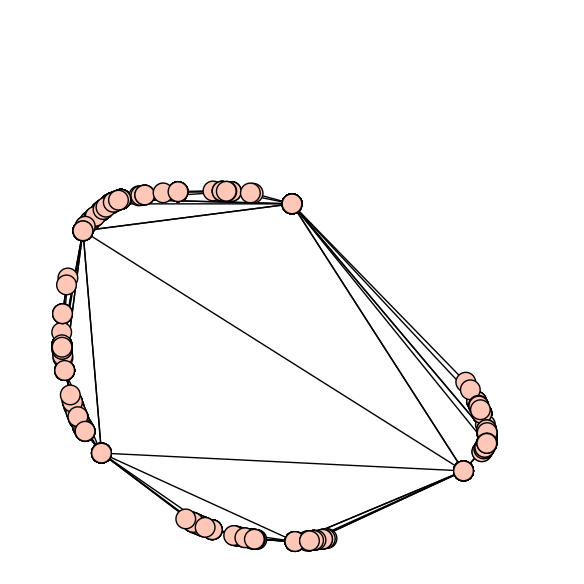} 
	\includegraphics[width=4.3cm]{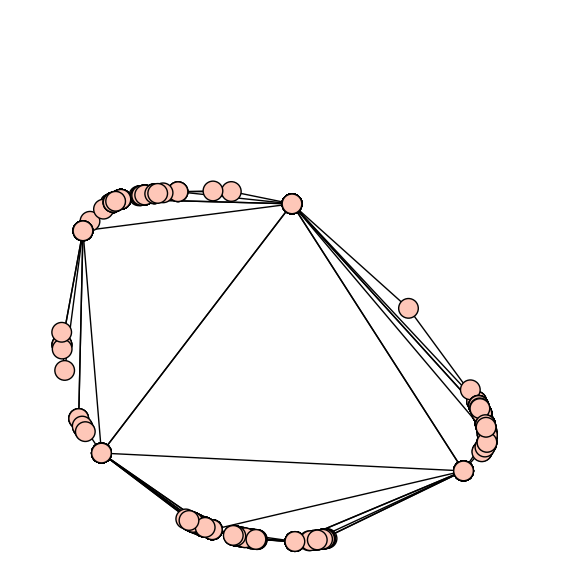} 
	\label{projseconddev}
	}
	\subfigure[An ellipse through 5 points on $\partial \Omega$]
		{\includegraphics[width=5.3cm]{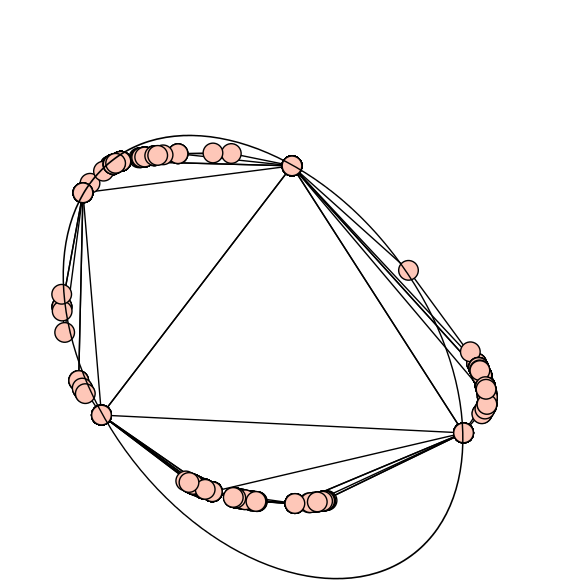}\label{ellipse}}
\caption{(Example 5) $\partial \Omega$ is not an ellipse for (4,7,$\frac 3 2$,1,1,7)}
\end{figure}

\textbf{Example 5.}
The projective once-punctured torus with structure given by $(4,7,\frac 3 2,1,1,7) \in Q$ has face pairings
$$
\widetilde E = 
 \left(\begin{array}{rrr}
\frac{29}{3} & 1 & \frac{2}{3} \\
\frac{11}{2} & 1 & 0 \\
-7 & -1 & 0
\end{array}\right), \qquad
\widetilde F = 
 \left(\begin{array}{rrr}
\frac{85}{69} & 0 & \frac{13}{23} \\
-\frac{3}{2} & 0 & -\frac{3}{2} \\
\frac{21}{2} & 1 & \frac{23}{2}
\end{array}\right) 
$$
with commutator given by
$$
[\widetilde E, \widetilde F]
=
P
 \left(\begin{array}{rrr}
1 & 1 & 0 \\
0 & 1 & 1 \\
0 & 0 & 1
\end{array}\right) 
P^{-1}, \qquad
P = 
 \left(\begin{array}{rrr}
\frac{15654925}{85698} & \frac{182365}{1242} & 0 \\
0 & \frac{580}{207} & 1 \\
0 & -\frac{4205}{414} & 0
\end{array}\right).
$$
We take the light-cone representative $e_1 = (1,0,0)$ of the fixed point of $[\widetilde E, \widetilde F].$ The initial triangulation, and input to the edge flipping algorithm, is the pair of triangles $\{e_1, \widetilde E e_1, \widetilde F e_1\}$ and $\{\widetilde E e_1, \widetilde F e_1, \widetilde E \widetilde F e_1\}$. The algorithm terminates after one edge flip (see Figure~\ref{projseconddev}). The given points in the orbit can be used to certify that $\partial \Omega$ is not an ellipse: We fit an ellipse through 5 points on the boundary, as shown in Figure~\ref{ellipse} (note that it takes 5 points to define an ellipse since two ellipses may intersect at 4 distinct points). The ellipse does not pass through all the points on the boundary. Recall that a projective structure is hyperbolic if and only if the boundary is the central projection of a circle, hence the projective structure defined by $(4,7,\frac 3 2,1,1,7) \in Q$ is not hyperbolic.


\textbf{Example 6.}
If we take two projective structures, for example  $$ (8,2,7,2,4,2),(8,2,3,7,1,2) \in Q,$$ one way to relate the two is to examine the structures in between. In particular, we can look at the cell decompositions of the linear combinations $$(1-\lambda) (8,2,7,2,4,2) + \lambda (8,2,3,7,1,2) \in Q.$$ Figure~\ref{movie} shows the canonincal cell decompositions of 12 evenly spaced sample points along the segment joining $(8,2,7,2,4,2)$ and $(8,2,3,7,1,2)$ in $Q$. Only the canonical cell decompositions are shown. By inspection we can deduce that an edge flip occured along this segment, in particular, between $\lambda = 0.82$ and $\lambda = 0.91$. A binary search with more sample points would improve this estimate further. The parameter $\lambda$ where the edge flip occurs is determined by the zero of polynomial of degree 13 polynomial.

\begin{figure}
	\centering 
	\includegraphics[width=3.6cm]{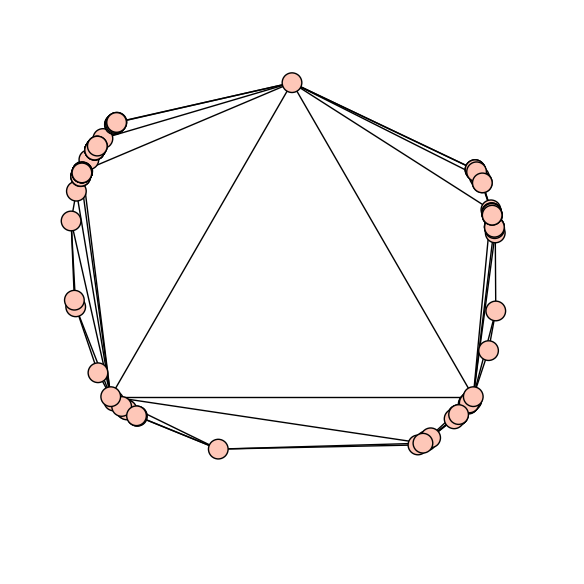} 
	\includegraphics[width=3.6cm]{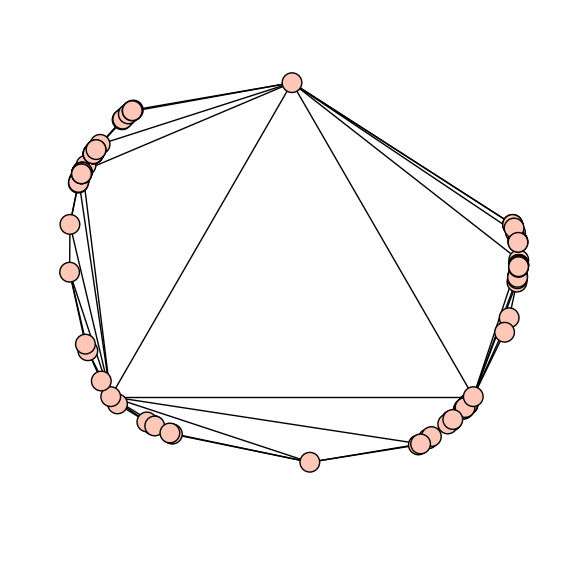} 
	\includegraphics[width=3.6cm]{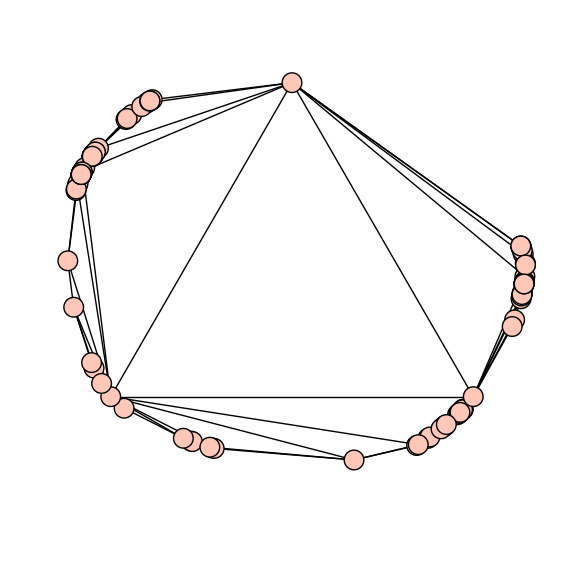} 
	\includegraphics[width=3.6cm]{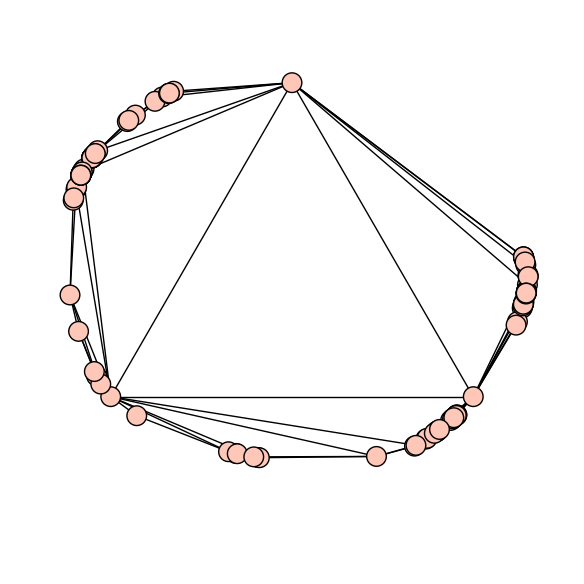} \\
	\includegraphics[width=3.6cm]{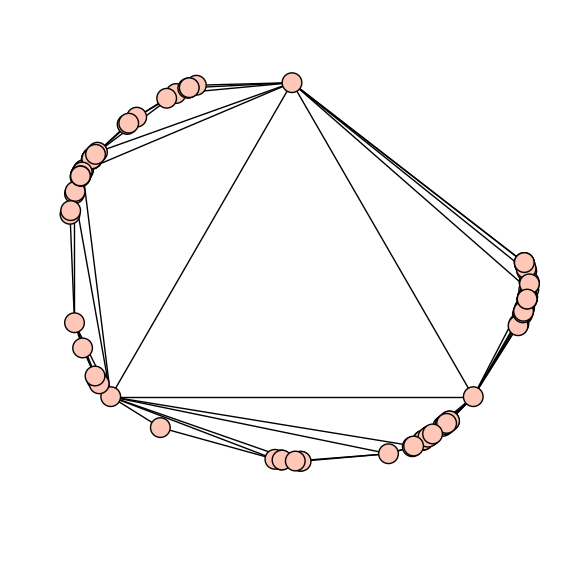}
	\includegraphics[width=3.6cm]{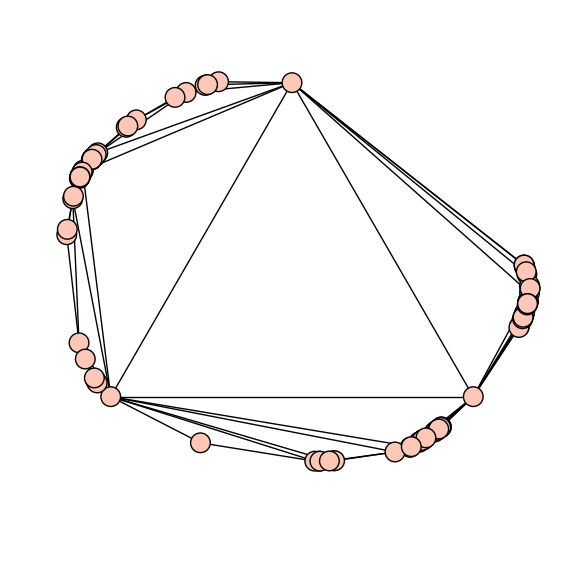} 
	\includegraphics[width=3.6cm]{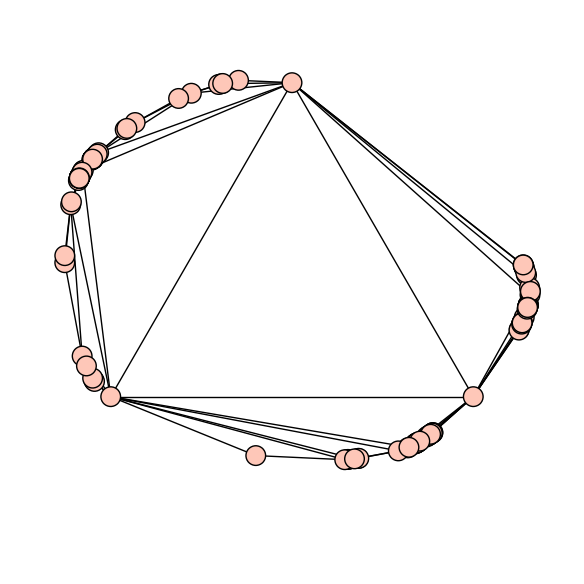} 
	\includegraphics[width=3.6cm]{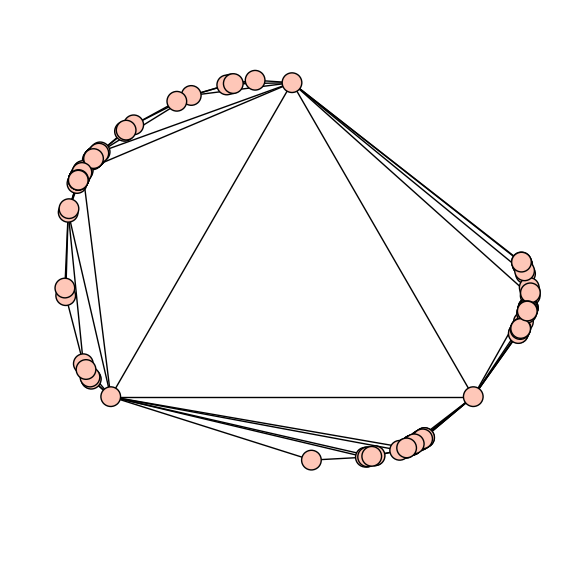} \\
	\includegraphics[width=3.6cm]{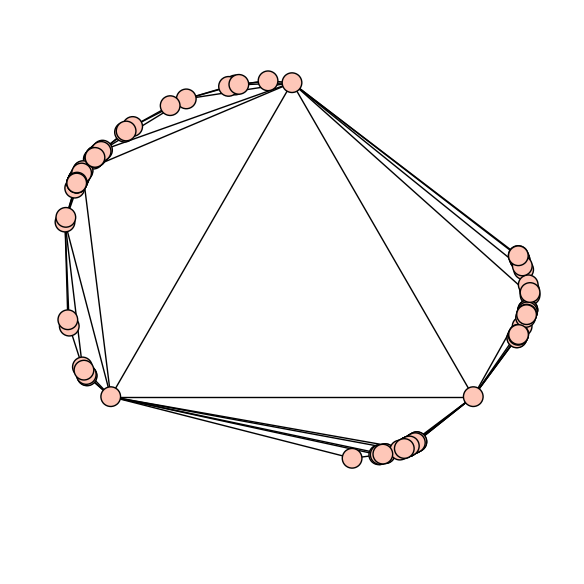} 
	\includegraphics[width=3.6cm]{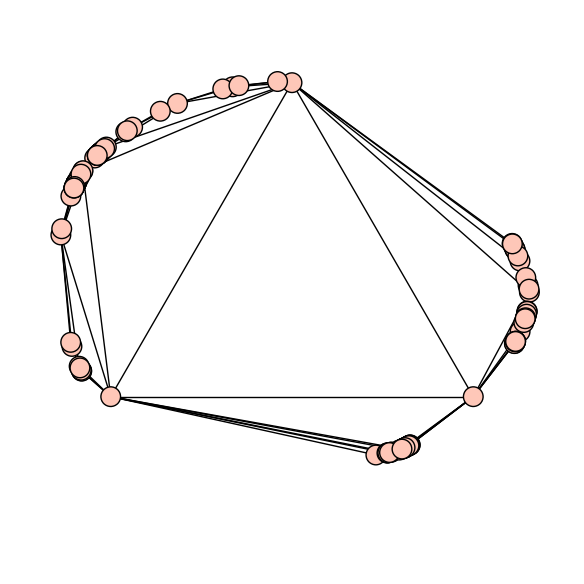} 
	\includegraphics[width=3.6cm]{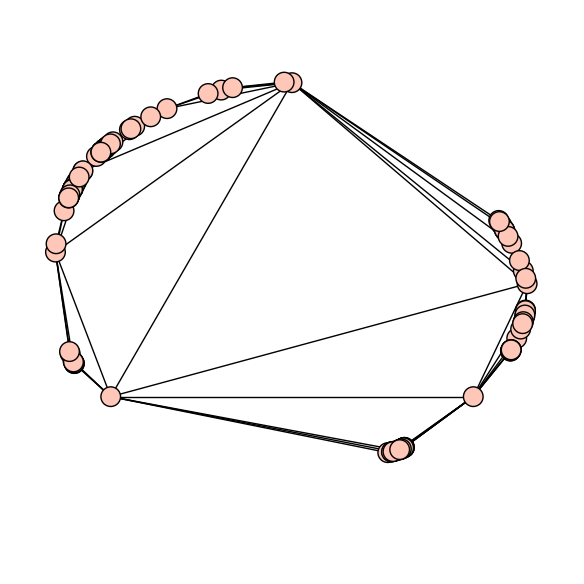} 
	\includegraphics[width=3.6cm]{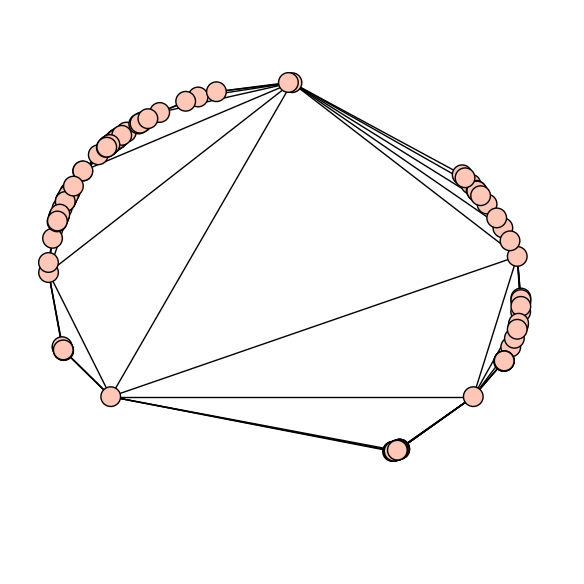} 
	\caption{(Example 6) The cell decompositions corresponding to 12 evenly spaced points along the segment joining $(8,2,7,2,4,2),(8,2,3,7,1,2) \in Q$.
	\label{movie}
	} 
\end{figure}

\subsection*{Acknowledgements}
{This research was partially supported by Australian Research Council grant DP140100158.}



\address{School of Mathematics and Statistics F07,\\ The University of Sydney,\\ NSW 2006 Australia\\--\\
tillmann@maths.usyd.edu.au\\
s.wong@maths.usyd.edu.au
}

\Addresses

\end{document}